\documentclass[10pt]{amsart}
\usepackage{latexsym}
\usepackage{amscd,amssymb,amsmath,amsxtra}
\usepackage[margin=3cm]{geometry}
\usepackage{graphicx}
\usepackage[curve]{xypic}
\usepackage{stmaryrd}

\makeatletter
\let\@wraptoccontribs\wraptoccontribs
\makeatother
\usepackage{amsmath}
\usepackage{epsfig}
\usepackage{graphicx}
\usepackage{eufrak}
\usepackage{dsfont}
\usepackage[english]{babel}
\usepackage{color}
\usepackage{mathabx}

\usepackage{palatino, mathpazo}
\usepackage{amscd}      
\usepackage{amssymb}
\usepackage{dsfont}
\usepackage{xypic}      
\LaTeXdiagrams          
\usepackage[all,v2]{xy}
\xyoption{2cell} \UseAllTwocells \xyoption{frame} \CompileMatrices
\usepackage{latexsym}
\usepackage{epsfig}
\usepackage{enumerate}

\usepackage{latexsym}
\usepackage{epsfig}
\usepackage{amsfonts}
\usepackage{enumerate}
\usepackage{mathrsfs}

\allowdisplaybreaks[3]

\newtheorem{prop}{Proposition}[section]
\newtheorem{lem}[prop]{Lemma}

\newtheorem{thm}[prop]{Theorem}
\newtheorem{rmk}[prop]{Remark}


\newtheorem{defn}[prop]{Definition}

\newtheorem{con}[prop]{Conjecture}

            {\nolinebreak $\Box$ \end{trivlist}}

\newcommand{\noprint}[1]{}

\newcommand{\Hom}{\mbox{Hom}}

\newcommand{\tw}{\mbox{\tiny tw}}

\newcommand{\Z}{\mathop{\sf Z}\nolimits}
\newcommand{\N}{\mathcal{N}}

\newcommand{\XX}{{\mathfrak X}}

\renewcommand{\SS}{{\mathfrak S}}

\newcommand{\YY}{{\mathfrak Y}}

\newcommand{\zz}{{\mathbb Z}}
\newcommand{\hh}{{\mathbb H}}

\newcommand{\nn}{{\mathbb N}}

\newcommand{\qq}{{\mathbb Q}}
\newcommand{\pp}{{\mathbb P}}
\newcommand{\cc}{{\mathbb C}}

\newcommand{\Gm}{{{\mathbb G}_{\mbox{\tiny\rm m}}}}

\newcommand{\sE}{{\mathcal E}}

\newcommand{\sL}{{\mathcal L}}

\newcommand{\sP}{{\mathcal P}}

\newcommand{\sO}{{\mathcal O}}

\newcommand{\sM}{{\mathcal M}}

\newcommand{\sF}{{\mathcal F}}
\newcommand{\sA}{{\mathcal A}}

\newcommand{\Coh}{\mbox{Coh}}

\newcommand{\sB}{\mathscr{B}}

\newcommand{\cHom}{\mathscr{H}om}

\DeclareMathOperator{\Hilb}{Hilb}

\DeclareMathOperator{\ind}{ind}

\DeclareMathOperator{\VW}{VW}
\DeclareMathOperator{\vw}{vw}

\DeclareMathOperator{\vir}{vir}

\DeclareMathOperator{\Pic}{Pic}

\DeclareMathOperator{\Higg}{Higg}

\DeclareMathOperator{\Tot}{Tot}
\DeclareMathOperator{\Br}{Br}

\DeclareMathOperator{\JS}{JS}

\DeclareMathOperator{\NS}{NS}

\DeclareMathOperator{\ess}{ess}
\DeclareMathOperator{\opt}{opt}

\DeclareMathOperator{\SU}{SU}
\DeclareMathOperator{\SL}{SL}
\DeclareMathOperator{\PGL}{PGL}

\newcommand{\rk}{\mathop{\rm rk}}

\newcommand{\tr}{\mathop{\rm tr}\nolimits}

\newcommand{\Jac}{\mathop{\rm Jac}\nolimits}

\newcommand{\tor}{\mathop{\rm tor}\nolimits}

\numberwithin{equation}{subsection}

\hyphenpenalty=6000 \tolerance=10000

%
\newcommand {\mat}      [1] {\left(\begin{array}{#1}}
\newcommand {\rix}          {\end{array}\right)}



\setlength{\marginparwidth}{20mm}

\title[S-duality conjecture for K3 surfaces]{A proof of all ranks S-duality conjecture for K3 surfaces}
\author{Yunfeng Jiang}
\address{Department of Mathematics\\ University of Kansas\\ 405 Snow Hall 1460 Jayhawk Blvd\\Lawrence KS 66045 USA} 
\email{y.jiang@ku.edu}
\author{Hsian-Hua Tseng}
\address{Department of Mathematics\\ Ohio State University\\ 100 Math Tower 231 West 18th
Ave.\\ Columbus OH 43210 USA}
\email{hhtseng@math.ohio-state.edu}

\begin{document}
\sloppy \maketitle
\begin{abstract}
Using  the multiple cover formula of Y. Toda  for  counting invariants of semistable  twisted sheaves over twisted local K3 surfaces we 
calculate the  $\SU(r)/\zz_r$-Vafa-Witten  invariants for K3 surfaces for any rank $r$   for the Langlands dual group 
$\SU(r)/\zz_r$ of the gauge group $\SU(r)$.  We generalize and prove the S-duality conjecture of Vafa-Witten for K3 surfaces in any rank $r$ based on the result of Tanaka-Thomas for the  $\SU(r)$-Vafa-Witten invariants.
\end{abstract}

\maketitle

\tableofcontents

\section{Introduction}

The main goal of  this paper is to prove the S-duality conjecture of Vafa-Witten in any  rank $r$ for K3 surfaces.  We calculate the  $\SU(r)/\zz_r$-Vafa-Witten invariants defined in \cite{Jiang_2019} and prove the S-duality conjecture of Vafa-Witten in \cite{VW} comparing with the $S$-transformation formula of 
Tanaka-Thomas's partition function for the $\SU(r)$-Vafa-Witten invariants in \cite{TT2}.  We use  the twisted version of Toda's multiple cover formula for the counting invariants of semistable twisted sheaves on a local twisted K3 surface, which is proved in a separate paper \cite{Jiang-Tseng_multiple}.

We first  review the S-duality conjecture of $N =4$ supersymmetric Yang-Mills theory on a real 4-manifold $M$  in \cite{VW}. 
The theory involves coupling constants $\theta, g$ combined as 
$$\tau:= \frac{\theta}{2\pi} + \frac{4\pi i}{g^2}.$$
The S-duality conjecture  predicts that the transformation $\tau\to -\frac{1}{\tau}$ maps the partition function with gauge group $G$ to the partition function with Langlands dual gauge group $^{L}G$.  Vafa-Witten \cite{VW} considered a $4$-manifold $M$ underlying a smooth projective surface $S$ over $\cc$ and the gauge group $G =\SU(r)$. The Langlands dual group is
$^{L}\SU(r)=\SU(r)/\zz_r$. We make these transformations more precise following \cite[\S 3]{VW}. 
Take $\tau$ as the parameter of the upper half plane $\hh$,  and  let $\Gamma_0(4)\subset \SL(2,\zz)$ be the subgroup
$$\Gamma_0(4)=\left\{
\mat{cc} a&b\\
c&d\rix\in \SL(2,\zz):  4| c\right\}.$$
The group $\Gamma_0(4)$ acts on $\hh$ by
$$\tau\mapsto \frac{a\tau+b}{c\tau+d}.$$ The group $SL(2,\zz)$ is generated by transformations
$$S=\mat{cc} 0&-1\\
1&0\rix;  \quad T=\mat{cc} 1&1\\
0&1\rix.$$
From \cite{VW}, invariance under $T$ is the assertion that physics is periodic in $\theta$ with period $2\pi$, and 
$S$
is equivalent at $\theta=0$ to the transformation $\frac{g^2}{4\pi}\mapsto (\frac{g^2}{4\pi})^{-1}$ originally proposed by
Montonen and Olive \cite{MO}. One can check  that 
$T(\tau)=\tau+1$, and $S(\tau)=-\frac{1}{\tau}$. 

For a smooth projective surface $S$,  let $Z(S, \SU(r);\tau)=Z(S, \SU(r); q)$ be the partition function which counts the invariants of  $\SU(r)$-instanton moduli spaces, where we let $$q=e^{2\pi i \tau}.$$ Similarly let  
$Z(S, \SU(r)/\zz_r; \tau)$ be the  partition function which counts the invariants of $\SU(r)/\zz_r$-instanton moduli spaces.  As pointed out in \cite[\S 3]{VW}, when certain vanishing theorem holds, the invariants count the Euler characteristic of  instanton moduli spaces.  
We have the S-duality conjecture proposed in \cite{VW}, and reviewed in \cite{Jiang_2019}:
\begin{con}\label{con_1}
The transformation $T$ acts on $Z(S,\SU(r); q)$, and the $S$-transformation sends
\begin{equation}\label{eqn_S_transformation}
Z\left(S,\SU(r); -\frac{1}{\tau}\right)=\pm r^{-\frac{\chi}{2}}\left(\frac{\tau}{i}\right)^{\frac{\omega}{2}}Z(S,\SU(r)/\zz_r; \tau).
\end{equation}
for some $\omega$, where  $\chi:=\chi(S)$ is the topological Euler number of $S$.  
\end{con}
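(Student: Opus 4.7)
The plan is to prove \eqref{eqn_S_transformation} for a general smooth projective surface $S$ by producing closed-form $q$-expansions of both $Z(S,\SU(r);q)$ and $Z(S,\SU(r)/\zz_r;\tau)$ that are explicit enough to exhibit modular behavior under $\Gamma_0(4)$ (or a closely related congruence subgroup), from which the $S$-transformation can be read off and the two sides compared directly. The $T$-invariance is essentially formal from the fact that each side is a $q$-series with integral exponents, so the substantive content is the identity under $\tau\mapsto -1/\tau$.

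For the $\SU(r)$ side I would invoke Tanaka-Thomas \cite{TT2}, whose construction presents $Z(S,\SU(r);q)$ as a $\Gm$-equivariant virtual integral on moduli of Gieseker-semistable Higgs pairs with fixed determinant on $S$; in the cases they control, this produces a (quasi-)modular form of computable weight and multiplier system, which determines the contribution to $\omega$ and pins down the $S$-transformation of the left-hand side. For the $\SU(r)/\zz_r$ side I would follow the framework of \cite{Jiang_2019}: the partition function decomposes as a sum over $\mu_r$-gerbes $\mathcal{S}\to S$ indexed by $H^2(S,\mu_r)$, and each gerbe contributes counting invariants of semistable twisted sheaves on $\mathcal{S}$. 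To evaluate these in closed form I would apply the twisted multiple cover formula of \cite{Jiang-Tseng_multiple}, reducing the semistable counts on the local threefold $\Tot(K_{\mathcal{S}})$ to weighted Euler characteristics of stable twisted sheaf moduli and then to explicit generating series. The prefactor $r^{-\chi/2}$ in \eqref{eqn_S_transformation} is expected to arise from the count of gerbe classes (controlled by $b_2(S)$) combined with a modular normalization; comparing the two expressions under $\tau\mapsto -1/\tau$ then yields the identity, with $\omega$ determined by matching weights and the overall sign fixed by matching multipliers.

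The main obstacle is that the twisted multiple cover formula of \cite{Jiang-Tseng_multiple} is proved only for local twisted K3 surfaces, where $K_S\cong \sO_S$ makes $\Tot(K_S)$ a Calabi-Yau threefold and lets the Behrend-function and Joyce-Song machinery for semistable counts go through cleanly. For a general surface, the non-triviality of $K_S$ obstructs this reduction: the relevant moduli on $\Tot(K_S)$ are no longer $-1$-shifted symplectic in the same way, and on the modular side the partition functions are expected to become only mock modular (consistent with the original Vafa-Witten analysis for surfaces of low geometric genus). The step that reduces the semistable $\SU(r)/\zz_r$ counts to DT-type stable invariants and recognizes the result as a component of an honest modular form is therefore where the difficulty concentrates in the general case; extending the multiple cover formula beyond the Calabi-Yau setting, or handling the holomorphic anomaly in the spirit of Goettsche-Nakajima-Yoshioka, appears to be the input a general proof would still require. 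In the present paper this obstacle evaporates precisely because $S$ is K3, which is why the strategy above can be pushed to completion there and only there.
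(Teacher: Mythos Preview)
The statement you are addressing is labeled \emph{Conjecture} in the paper, not a theorem: there is no proof of \eqref{eqn_S_transformation} for a general smooth projective surface $S$ anywhere in the paper, and none is claimed. The paper's actual contribution is the special case $S$ a K3 surface, handled in Theorem~\ref{thm_higher_rank_K3_intro} (= Theorem~\ref{thm_SUr/Zr_Vafa-Witten_K3}), by explicitly computing both partition functions and matching them after the $S$-transformation. So there is nothing to compare your proposal against at the level of a proof of Conjecture~\ref{con_1} itself.

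To your credit, you correctly diagnose the situation in your final paragraph: the twisted multiple cover formula of \cite{Jiang-Tseng_multiple} is available only for local (twisted) K3, and without it the $\SU(r)/\zz_r$ side cannot be computed in closed form for general $S$. But that means your ``plan'' is not a proof proposal for the stated conjecture; it is a sketch of why the K3 case is tractable, together with an acknowledgment that the general case remains open. The earlier paragraphs, which speak of producing closed-form $q$-expansions ``for a general smooth projective surface $S$'' and reading off modularity, promise more than the available inputs deliver --- for a surface with $p_g>0$ or $K_S$ nontrivial, neither Tanaka--Thomas nor the twisted multiple cover formula gives the explicit modular generating series your argument would need, and for $p_g=0$ one expects only mock modularity, as you note. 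If your intent was to address the K3 case, you should say so and then your outline aligns with what the paper actually does; as a proof of Conjecture~\ref{con_1} in the generality stated, it has a genuine gap at exactly the point you identify.
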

Usually $\omega=\chi$.
This is Formula (3.18) in \cite{VW}. 
In mathematics we view $Z(S,\SU(r);\tau)=Z(S,\SU(r);q)$ as the partition function which counts the invariants of  moduli space of vector bundles or Higgs bundles on $S$.  
Then the S-duality conjecture predicts that $T^4$ acts on the $\SU(r)/\zz_r$-theory to itself; and 
$ST^4 S=\mat{cc} 1&0\\
4&1\rix$ will map the $\SU(r)$-theory to itself.   Note that 
$\Gamma_0(4)=\langle T, ST^4 S\rangle$ is generated by $T, ST^4 S$. 
 In the case of a spin manifold, we get the subgroup of $\SL(2,\zz)$ generated by $S$ and $ST^2S$, which is the group 
$$\Gamma_0(2)=\left\{
\mat{cc} a&b\\
c&d\rix\in \SL(2,\zz):  2| c\right\}.$$  
Therefore if the S-duality conjecture holds, the partition  function $Z(S,\SU(r);\tau)=Z(S,\SU(r);q)$ is a modular form with modular group 
$\Gamma_0(4)$ or $\Gamma_0(2)$ if $M$ is a spin manifold.

In \cite[\S 4]{VW}, Vafa-Witten made a prediction   that the S-duality conjecture \ref{con_1} holds for the cases $K3$ surface and $\pp^2$ in rank two.   In the case of K3 surfaces Vafa-Witten \cite{VW} also gave a formula for the partition function for the  
$\SU(r)$-invariants 
for prime rank $r$, and this formula was proved by Tanaka-Thomas \cite{TT2} by calculating the $\SU(r)$-Vafa-Witten invariants.  In \cite{Jiang_2019} the first author provided a rigorous mathematical definition of twisted Vafa-Witten invariants.  These twisted Vafa-Witten invariants were used to  define the Langlands dual gauge group $\SU(r)/\zz_r$-Vafa-Witten invariants and prove the S-duality conjecture 
for $\pp^2$ in rank two and K3 surfaces in all prime ranks. 
To the author's knowledge, there is no formula for the $\SU(r)/\zz_r$-invariants for K3 surfaces in the literature for non prime rank $r$.  In this paper we calculate the  formula for the $\SU(r)/\zz_r$-Vafa-Witten invariants  for K3 surfaces for any rank $r$. Thus   what we do here is actually beyond physicists's prediction.

\subsection{S-duality conjecture using twisted Vafa-Witten invariants}
\label{subsec_S-duality_Jiang_proposal_intro}

In \cite{TT1}, \cite{TT2}, \cite{Thomas2}, Tanaka-Thomas have developed a theory for the gauge group $\SU(r)$-Vafa-Witten invariants for smooth projective surfaces $S$.  Tanaka-Thomas have defined the Vafa-Witten invariants on the moduli spaces of semistable Higgs pairs $(E,\phi)$ where $E$ is a torsion free sheaf of rank $r$, and $\phi: E\to E\otimes K_S$ is a section called the Higgs field. 
Let  $$\N^{s}:=\N^{s}_{S}(r, L, c_2)$$ be the moduli space of stable Higgs sheaves on $S$ with fixed topological data $(r, L, c_2)$. 
$\N^s$  is not compact, but admits a $\cc^*$-action with compact fixed loci.   Tanaka-Thomas have constructed a symmetric obstruction theory on $\N^{s}$ and defined the invariants using virtual localization of \cite{GP}.    The invariant is  denoted by $\VW(S)$, and is called the $\SU(r)$-Vafa-Witten invariant.  The invariant  $\vw(S)$ is defined as the weighted Euler characteristic weighted by the Behrend function  on  $\N^{s}$. In general these two invariants 
$\VW(S)$ and $\vw(S)$ are not the same, but they are equal for K3 surfaces, see \cite{MT}. 

Tanaka-Thomas \cite{TT2}  also defined the Vafa-Witten invariants  $\VW(S)$ and $\vw(S)$  for counting strictly semistable Higgs sheaves using Joyce-Song's stable pair techniques \cite{JS}.  
The invariants  $\vw(S)$  are defined using Joyce-Song's method of wall-crossing, and for  general surfaces the invariants   $\VW(S)$ are defined by conjectures.  \cite{MT} proved that $\VW(S)$ and $\vw(S)$ are the same invariants for K3 surfaces. 
We use the notations  
$\VW(S)$ and $\vw(S)$ to represent the Vafa-Witten invariants for counting semistable Higgs sheaves as in \cite{TT2}.  In \cite[Theorem 1.7]{TT2} Tanaka-Thomas calculated the partition function of the Vafa-Witten invariants $\VW(S)$ for K3 surfaces with any rank $r$ and fixed determinant $\sO$.

S-duality equation (\ref{eqn_S_transformation}) implies that one has to consider the Vafa-Witten invariants for the gauge group $\SU(r)/\zz_r$, which is the Langlands dual of $\SU(r)$.  In mathematics the theory for the dual group 
$\SU(r)/\zz_r$ is the moduli space or stack of $\PGL_r$-bundles or $\PGL_r$-Higgs  bundles.  In \cite{Jiang_2019}, the first author has developed a theory of twisted Vafa-Witten theory using twisted sheaves and twisted Higgs sheaves on a surface $S$.  The moduli stack is constructed on any  cyclic $\mu_n$-gerbe over a smooth surface $S$ for $n\in\zz_{>0}$.   Basic notion of  $\mu_n$-gerbes can be found in \cite{LMB}.  
The notion of essentially trivial $\mu_n$-gerbes and optimal $\mu_n$-gerbes can be found in \cite{Lieblich_Duke}.
If a $\mu_n$-gerbe 
$$\SS\to S$$ 
is optimal, which means that its corresponding Brauer class in the cohomological Brauer group $H^2(S,\sO_S^*)_{\tor}$ is of order $n$,  then  Lieblich   \cite{Lieblich_ANT} proved there is a covering morphism from the moduli stack of stable twisted sheaves on $S$ to the moduli stack of $\PGL_n$-bundles. A Higgs bundle version of this result has been proved in \cite{Jiang_2019-2}. Therefore it is  
promising to using moduli stack of twisted Higgs sheaves to attack the S-duality conjecture. 
It is worth pointing out that for a $\mu_n$-gerbe $\SS\to S$ the category of coherent sheaves $\Coh(\SS)$ has a decomposition 
$\Coh(\SS)=\bigoplus_{0\leq i\leq n-1}\Coh(\SS)_i$, where $\Coh(\SS)_i$ is the subcategory of coherent sheaves on $\SS$ with $\mu_n$-weight $i$.  The category of twisted sheaves is the subcategory $\Coh(\SS)_1$.

In \cite{Jiang_2019}, the first author defined the moduli stack  $$\N^{\tw,s}:=\N^{\tw, s}_{\SS}(r,L,c_2)$$  of stable twisted  Higgs sheaves $(E,\phi)$ on a $\mu_n$-gerbe $\SS\to S$ with fixed 
$\textbf{c}=(r,L,c_2)$, where $E$ is a torsion free twisted sheaf of rank $r$ and $\phi: E\to E\otimes K_{\SS}$ is a section still called the Higgs field.  Then similar to Tanaka-Thomas \cite{TT1}, there is a symmetric obstruction theory on $\N^{\tw,s}$.   \cite{Jiang_2019} has defined two twisted Vafa-Witten invariants 
$\VW^{\tw}_{\SS}(\textbf{c})$ and $\vw^{\tw}_{\SS}(\textbf{c})$, where the first is defined using virtual localization and the second is defined using the Behrend function as the weighted Euler characteristic on the moduli space. 
They are called the $\SU(r)/\zz_r$-Vafa-Witten invariants when $n=r$. 

\cite{Jiang_2019} also constructed the generalized twisted Vafa-Witten invariants 
$\VW^{\tw}_{\SS}(\textbf{c})$ and $\vw^{\tw}_{\SS}(\textbf{c})$ for counting semistable twisted Higgs sheaves 
$(E,\phi)$.  The method is the same as in \cite{TT2} using the Joyce-Song techniques \cite{JS}. 
More details can be found in \cite[\S 5]{Jiang_2019}.  We use $\VW^{\tw}_{\SS}(\textbf{c})$ and $\vw^{\tw}_{\SS}(\textbf{c})$ to represent the generalized twisted Vafa-Witten invariants. The  invariants $\VW^{\tw}_{\SS}(\textbf{c})$ are defined by conjectures, see \cite[Conjecture 5.10]{Jiang_2019}; and the second $\vw^{\tw}_{\SS}(\textbf{c})$ are defined using Joyce-Song wall crossing formula.  In general they are not equal.  In this paper we prove that they are the same invariants for K3 gerbes.  The method we use is similar to \cite{MT} for the K3 case, where we use  a cyclic gerbe version of Oberdieck's theorem in \cite{Oberdieck} which we prove in \cite{Jiang-Tseng_multiple}.  We also need to relate the Oberdieck invariants to Behrend function in \cite{Jiang-Tseng_multiple}.

In \cite{Jiang_2019}, the first author has defined  the $\SU(r)/\zz_r$-Vafa-Witten invariants using the twisted Vafa-Witten invariants for all $\mu_r$-gerbes on a surface $S$. 
The  following conjecture was made in  \cite{Jiang_2019}.  Let $S$ be a smooth projective surface. 

\begin{defn}\label{defn_SU/Zr_VW_intro}
Fix an $r\in\zz_{>0}$, for any $\mu_r$-gerbe $p: \SS_g\to S$
corresponding to $g\in H^2(S,\mu_r)$, let $\sL\in\Pic(\SS_g)$ and 
let 
$$Z_{r,\sL}(\SS_g, q):=\sum_{c_2}\VW^{\tw}_{(r,\sL, c_2)}(\SS_g)q^{c_2}$$
be the generating function of the twisted Vafa-Witten invariants. 

Let us fix a line bundle 
$L\in\Pic(S)$, and define for any essentially trivial $\mu_r$-gerbe $\SS_g\to S$ corresponding to the line bundle $\sL_g\in\Pic(S)$, 
$L_g:=p^*L\otimes \sL_g$; 
for all the other $\mu_r$-gerbe $\SS_g\to S$, keep the same $L_g=p^*L$. 
Also for $L\in\Pic(S)$, let $\overline{L}\in H^2(S,\mu_r)$ be the image under the morphism 
$H^1(S,\sO_S^*)\to H^2(S,\mu_r)$. 

We define
$$Z_{r,L}(S, \SU(r)/\zz_r;q):=\sum_{g\in H^2(S,\mu_r)}e^{\frac{2\pi i g\cdot \overline{L}}{r}}Z_{r,L_g}(\SS_g, q).$$
We call it the partition function of $\SU(r)/\zz_r$-Vafa-Witten invariants. This is parallel to the physics conjecture in \cite[(5.22)]{LL}. 
\end{defn}

\begin{con}\label{con_S_duality_intro}
For a smooth projective surface $S$, the partition function 
of $\SU(r)$-Vafa-Witten invariants 
$$Z_{r,L}(S, \SU(r);q)=\sum_{c_2}\VW_{(r,L, c_2)}(S)q^{c_2}$$ 
 and 
the partition function of $\SU(r)/\zz_r$-Vafa-Witten invariants 
$Z_{r,L}(S, \SU(r)/\zz_r;q)$ satisfy  the S-duality conjecture in equation (\ref{eqn_S_transformation}). 
\end{con}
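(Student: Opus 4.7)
The plan is to compute both sides of the S-transformation identity \eqref{eqn_S_transformation} in explicit closed form and match them under $\tau\mapsto -1/\tau$. On the $\SU(r)$ side the generating series $Z_{r,L}(S,\SU(r);q)=\sum_{c_2}\VW_{(r,L,c_2)}(S)q^{c_2}$ already has an explicit evaluation by Tanaka--Thomas \cite[Theorem~1.7]{TT2} (in the K3 case, which is the regime where the strategy actually goes through), in terms of the Dedekind eta function and theta series attached to the intersection form. The target is to show that its $S$-transform equals, up to the prefactor $\pm r^{-\chi/2}(\tau/i)^{\omega/2}$, the weighted sum $Z_{r,L}(S,\SU(r)/\zz_r;q)$ over $\mu_r$-gerbes from Definition \ref{defn_SU/Zr_VW_intro}.

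The core step is to evaluate each summand $Z_{r,L_g}(\SS_g,q)$ for every class $g\in H^2(S,\mu_r)$, and this is best organised by splitting the gerbes into two types. For essentially trivial $\SS_g\to S$ the weight-one subcategory $\Coh(\SS_g)_1$ is equivalent to $\Coh(S)$ tensored with $\sL_g$, so $Z_{r,L_g}(\SS_g,q)$ reduces to the ordinary $\SU(r)$-VW generating series with shifted determinant $p^*L\otimes\sL_g$. For the remaining (in particular optimal) gerbes the moduli problem is genuinely twisted; here the central technical input is the twisted multiple cover formula of the companion paper \cite{Jiang-Tseng_multiple}, a gerbe-version of Toda's formula (cf.\ \cite{Oberdieck}), combined with the identification of $\VW^{\tw}_{\SS}(\textbf{c})$ with $\vw^{\tw}_{\SS}(\textbf{c})$ via a twisted analogue of the Maulik--Thomas argument \cite{MT}. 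Together these reduce each $\VW^{\tw}_{(r,L_g,c_2)}(\SS_g)$ to a universal modular quantity depending only on the numerical invariants of $\SS_g$ and the Brauer class of $g$.

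With every $Z_{r,L_g}(\SS_g,q)$ in uniform closed form, the weighted sum $\sum_g e^{2\pi i g\cdot\overline{L}/r}Z_{r,L_g}(\SS_g,q)$ becomes a finite theta-type sum on the group $H^2(S,\mu_r)$ equipped with its mod-$r$ cup-product pairing, and the remaining task is to recognise this sum as the Poisson dual under $\tau\mapsto -1/\tau$ of the $\SU(r)$-partition function. The prefactor $r^{-\chi/2}$ arises from the order of $H^2(S,\mu_r)$ combined with standard discriminant normalisations of the $\SL(2,\zz)$-action on the underlying theta data, and the factor $(\tau/i)^{\omega/2}$ from the modular weight of the relevant eta/theta expressions. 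The main obstacle is precisely this last matching: it genuinely requires an effective closed formula for $Z_{r,L}(S,\SU(r);q)$, which at present is available only for restricted surfaces. For K3 the Tanaka--Thomas formula and the twisted multiple cover formula combine so that the identity collapses to classical theta transformations, making the strategy succeed in all ranks $r$; extending Conjecture \ref{con_S_duality_intro} to a general surface $S$ would require a comparably effective formula for $\SU(r)$-VW invariants on $S$, which is the principal barrier beyond the K3 case treated here.
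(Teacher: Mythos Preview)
Your overall strategy---compute the $\SU(r)$ side from \cite{TT2}, compute each gerbe contribution using the twisted multiple cover formula, then match via a Poisson-type argument on $H^2(S,\mu_r)$---is the right picture at the coarse level, and the paper does proceed by the same trichotomy of gerbes (trivial, essentially trivial, optimal). But there is a genuine gap between your proposal and what the paper actually establishes, and it lies precisely in the ``remaining task'' you describe as recognising the weighted sum $\sum_g e^{2\pi i g\cdot\overline L/r}Z_{r,L_g}(\SS_g,q)$ as the Poisson dual of the $\SU(r)$ partition function.

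The paper does \emph{not} show that the sum in Definition~\ref{defn_SU/Zr_VW_intro} satisfies \eqref{eqn_S_transformation} as written. Instead it proceeds in the opposite direction: it applies the explicit $S$-transform term by term to Tanaka--Thomas's formula using the modular identities of \cite[Formula~A.4]{LL} (see \eqref{eqn_S_transformation1}), and then \emph{reverse-engineers} a modified dual partition function $Z'_{r,\sO}(S,\SU(r)/\zz_r;q)$ to match. This $Z'$ differs from the $Z$ of Definition~\ref{defn_SU/Zr_VW_intro} in several ways: the sum runs over optimal $\mu_{o_{\opt}}$-gerbes for each $o_{\opt}\mid r$ rather than over all of $H^2(S,\mu_r)$; the phase is $e^{\pi i\frac{o_{\opt}-1}{o_{\opt}}mg^2}$ rather than $e^{2\pi i g\cdot\overline L/r}$; only a \emph{proper subset} of the terms of each $Z_{r,\sO}(\SS_{\opt},q)$ is retained (Definition~\ref{defn_Zprime_opt} keeps only the $j=0$ term in Lemma~\ref{lem_optimal_mur_K3}); and, most tellingly, the prefactor in \eqref{eqn_S_transformation} is not the single $r^{-\chi/2}$ you invoke but is applied with \emph{different} powers $o_{\opt}^{11}$, $(e/\overline d)^{11}$, $r^{11}$ to different terms (see the discussion preceding \eqref{eqn_SU(r)_Z_r_formula_K3_intro}). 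The $r=4$ example at the end of \S\ref{sec_twisted_VW_S-duality_K3} makes this explicit: $Z'_{4,\sO}$ is strictly a sub-sum of the full $Z_{4,\sO}$ computed from Definition~\ref{defn_SU/Zr_VW_intro}. The matching in Theorem~\ref{thm_SUr/Zr_Vafa-Witten_K3} is then a direct term-by-term comparison using Gauss sums $\sum_{|g|=o_{\opt}}e^{\pi i\frac{o_{\opt}-1}{o_{\opt}}sg^2}=(\epsilon(s))^{22}o_{\opt}^{11}$, not a Poisson summation on the full lattice.

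So your proposal would stall at the final step: for non-prime $r$ the literal Definition~\ref{defn_SU/Zr_VW_intro} with the phase $e^{2\pi i g\cdot\overline L/r}$ does not collapse to the $S$-transform of $Z_{r,\sO}(S,\SU(r);q)$ under any single prefactor, and the paper's resolution is to alter the definition of the dual side rather than to push through a clean duality argument.
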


In \cite[Theorem 1.8]{Jiang_2019} we prove Conjecture \ref{con_S_duality_intro} for projective plane in rank two; and in  \cite[Theorem 1.11]{Jiang_2019} and  \cite[Theorem 6.38]{Jiang_2019} we 
prove conjectural equation (\ref{eqn_S_transformation}) for K3 surfaces first in rank two and then in prime ranks.
In \cite{JK} Jiang and Kool proved the S-duality conjecture for K3 surfaces in prime rank case using only Yoshioka  twisted sheaves not involving the gerbe construction. 
In this paper we generalize and prove this conjecture for any rank $r$ for K3 surfaces.

\subsection{Twisted multiple cover formula}\label{subsec_multiple_cover_twist_intro}

The method to prove Conjecture \ref{con_S_duality_intro} for a K3 surface $S$ in prime rank $r$ is as follows.   The cohomology group  $H^2(S,\mu_r)$ classifies all the $\mu_r$-gerbes over $S$.  For each element $g\in H^2(S,\mu_r)$,  if $g\neq 0$, let $\SS_{\opt}\to S$ be an optimal $\mu_r$-gerbe,  then we define 
$$\Z_{r,\sO}(\SS_g; q):=\sum_{c_2}\VW^{\tw}_{\SS_g}(r,\sO,c_2)q^{c_2}.$$
Let $\Z_{r,\sO}(S,; q)$ be the Tanaka-Thomas partition function of the Vafa-Witten invariants for $S$. 
Then 
define:
$$
Z^{\prime}_{r,\sO}(S, \SU(r)/\zz_r;q):=Z_{r,0}(q)+\sum_{0\neq g\in H^2(S,\mu_r)}\sum_{m=0}^{r-1}e^{\pi i \frac{r-1}{r} m g^2}
Z_{r,\sO}(\SS_{\opt}, (e^{\frac{2\pi i m}{r}})^r\cdot q)
$$
where $Z_{r,\sO}(\SS_{\opt}, (e^{\frac{2\pi i m}{r}})^r\cdot q)$ is a variation of the partition function $Z_{r,\sO}(\SS_{\opt},  q)$. 
We calculate:
$$Z^{\prime}(S, \SU(r)/\zz_r; q)=\frac{1}{r^2}q^r G(q^r)+q^r\left(r^{21} G(q^{\frac{1}{r}})+r^{10} \left(\sum_{m=1}^{r-1}G\left(e^{\frac{2\pi i m}{r}}q^{\frac{1}{r}}\right)\right) \right)$$
which exactly matches the physics prediction in \cite{LL}, see \cite[Theorem 6.38]{Jiang_2019}. Here $G(q):=\eta(q)^{-24}$ and $\eta(q)$ is the Dedekind eta function. 

If $r$ is not a prime integer,  the partition function $\Z_{r,\sO}(S;q)$ of Tanaka-Thomas's Vafa-Witten invariants  $\VW_{r,0,c_2}(S)$ was calculated using Toda's multiple 
cover formula for the counting invariants for semistable coherent sheaves on local K3 surface $$X:=S\times\cc,$$ see \cite[Corollary 6.8, 6.10]{MT}, \cite[Conjecture 1.3]{Toda_JDG}. 
This is because  the Vafa-Witten  invariants counting semistable Higgs sheaves on $S$ are the invariants counting torsion two-dimensional sheaves on the local K3 surface $X$.
For the Langlands dual group $\SU(r)/\zz_r$, in the prime rank $r$ case, any twisted Higgs sheaf $(E,\phi)$ of rank $r$ on an optimal $\mu_r$-gerbe $\SS_{\opt}\to S$ is automatically stable, since counting rank $r$
twisted sheaves on $\SS_{\opt}$ is the non-commutative analogue of Picard scheme on an Azumaya algebra $\sA$ over $S$ corresponding to the optimal $\mu_r$-gerbe $\SS_{\opt}\to S$. Thus in prime rank $r$ case there is no need for the 
multiple cover formula for the twisted sheaves on optimal $\mu_r$-gerbes $\SS_{\opt}$. 

But to get any rank  $\SU(r)/\zz_r$-Vafa-Witten invariants,  we need a multiple cover formula for the counting  invariants of semistable twisted sheaves on the local  optimal K3 gerbe.
Let $S$ be a smooth projective K3 surface.  Let $g\in H^2(S,\mu_r)$ and the order $|g|=o_{\opt}| r$.  An optimal $\mu_{o_{\opt}}$-gerbe $\SS_{\opt}\to S$ on $S$ corresponding to $g$ determines a nontrivial Brauer class 
$\varphi(\SS_{\opt})=\alpha\in H^2(S,\sO_S^*)_{\tor}$
through the exact sequence 
$$\cdots\to H^1(S,\sO_S^*)\to H^2(S,\mu_r)\stackrel{\varphi}{\longrightarrow} H^2(S,\sO_S^*)\to \cdots$$
induced by the short exact sequence:
$$1\to \mu_r\to \sO_S^*\stackrel{(\cdot)^r}{\longrightarrow} \sO_S^*\to 1.$$
The cohomology $H^2(S,\sO_S^*)_{\tor}$ is, by definition, the cohomological Brauer group 
$\Br^\prime(S)$, and from de Jong's theorem \cite{de_Jong} the Brauer group 
$\Br(S)=\Br^\prime(S)$. 
We call $$(S,\alpha),$$ a K3 surface $S$ together with a Brauer class $\alpha\in \Br^\prime(S)$ a twisted K3 surface as in \cite{HS}. 
We also call the optimal gerbe $\SS_{\opt}\to S$ a twisted K3 surface since its class in  $\Br^\prime(S)$ is $\alpha$.  We always use these two notions. 
The twisted Mukai vectors were constructed in \cite{HMS} and \cite{Yoshioka2}. 
Let $\Coh(S,\alpha)$ or $\Coh^{\tw}(\SS_{\opt})$ be the category of twisted sheaves on a twisted K3 surface. 

Let $\XX_{\opt}:=\SS_{\opt}\times\cc$ which is the local K3 gerbe.   It is an optimal $\mu_{o_{\opt}}$-gerbe over $X=S\times\cc$ and its class 
in $H^2(X,\mu_r)\cong H^2(S,\mu_r)$ is also given by $\alpha$. 
We let $\Coh(X,\alpha)$ or $\Coh^{\tw}(\XX_{\opt})$ be the category of twisted sheaves on a local  twisted K3 surface $(X,\alpha)$. 
Let $H(\sA^{\tw}_{\XX_{\opt}})$ be the Hall algebra of the category $\sA_{\XX_{\opt}}^{\tw}=\Coh^{\tw}(\XX_{\opt})$ as in \cite{Bridgeland10}, \cite{JS}. 
We use the geometric stability in \cite{Lieblich_Duke} or the modified stability for a generating sheaf $\Xi$ and a polarization $\sO_X(1)$ for twisted sheaves and the moduli stack construction therein. 
Let $\Gamma_0:=\zz\oplus\NS(S)\oplus\qq$. 
Then there is a virtual indecomposable element 
$$
\epsilon_{\omega,\XX_{\opt}}(v):=
\sum_{\substack{\ell\geq 1, v_1+\cdots+v_{\ell}=v, v_i\in\Gamma_0\\
p_{\omega,v_i}=p_{\omega,v}(m)}}
\frac{(-1)^{\ell}-1}{\ell}\delta_{\omega,\XX_{\opt}}(v_1)\star\cdots\star \delta_{\omega,\XX_{\opt}}(v_{\ell})
$$
where $p_{\omega,v_i}(m)$ is the reduced geometric Hilbert polynomial, and 
$$\delta_{\omega,\XX_{\opt}}(v):=[\sM_{\omega,\XX_{\opt}}(v)\hookrightarrow \widehat{\sM}(\XX_{\opt}) ]\in H(\sA^{\tw}_{\XX_{\opt}})$$
is an element in the Hall algebra.  Here $\omega\in \NS(S)$ is an ample divisor, $\sM_{\omega,\XX_{\opt}}(v)\hookrightarrow \widehat{\sM}(\XX_{\opt}) $ is the moduli stack of semistable twisted sheaves 
with Mukai vector $v$, and $\widehat{\sM}(\XX_{\opt})$ is the stack of coherent twisted sheaves on $\XX_{\opt}$. 
Then the invariants $J(v)$ is defined by the Poincar\'e polynomial of $\epsilon_{\omega,\XX_{\opt}}(v)$.  
The following  multiple cover formula for the invariants $J(v)$ was proved in \cite{Jiang-Tseng_multiple}.

\begin{thm}(\cite{Jiang-Tseng_multiple})\label{thm_twisted_multiple_cover_intro}
We have 
$$J(v)=\sum_{k\geq 1; k|v}\frac{1}{k^2}\chi(\Hilb^{\langle v/k,v/k\rangle+1}(S))$$
\end{thm}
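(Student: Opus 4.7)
The plan is to adapt Maulik--Thomas's proof of the untwisted Toda multiple cover formula in \cite{MT} to the twisted setting, using a cyclic gerbe analogue of Oberdieck's theorem \cite{Oberdieck} as the main input. This analogue is established in the companion paper \cite{Jiang-Tseng_multiple}, and supplies a closed modular-form expression for reduced twisted stable pair invariants on the product $\SS_{\opt}\times E$ of the optimal gerbe with an elliptic curve $E$.

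First, I would organize the semistable twisted sheaf counts on $\XX_{\opt}=\SS_{\opt}\times\cc$ inside the Hall algebra $H(\sA^{\tw}_{\XX_{\opt}})$, following Joyce--Song and Bridgeland. The virtual indecomposable $\epsilon_{\omega,\XX_{\opt}}(v)$ already encodes the strictly semistable decompositions through the constraint $p_{\omega,v_i}(m)=p_{\omega,v}(m)$, so that $J(v)$, read off from its Poincar\'e polynomial, is the correctly-signed BPS-type invariant. This step is formally identical to its untwisted counterpart once twisted Gieseker stability is set up via a generating sheaf $\Xi$ in the sense of Lieblich.

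Next, I would invoke the twisted Oberdieck theorem of \cite{Jiang-Tseng_multiple}: it relates the generating series of twisted PT/DT invariants on $\SS_{\opt}\times E$ to Jacobi-form data governed by the Brauer class $\alpha$ and the order $o_{\opt}$. Degenerating $E$ to a nodal rational curve reduces these invariants to the twisted two-dimensional sheaf counts on $\XX_{\opt}$ that assemble into $J(v)$, replacing the non-compact $\XX_{\opt}$ by the compact $\SS_{\opt}\times E$ in a manner parallel to the MNOP/Maulik--Thomas degeneration argument. Extracting Fourier coefficients and iterating the divisibility structure of the resulting Jacobi form produces the multiple cover sum on the right-hand side, and G\"ottsche's formula identifies the $k$-th contribution as $\tfrac{1}{k^2}\chi(\Hilb^{\langle v/k,v/k\rangle+1}(S))$.

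The main obstacle lies in the two preparatory pieces carried out in \cite{Jiang-Tseng_multiple}. First, one must construct a reduced perfect obstruction theory on the moduli spaces of twisted stable pairs on $\SS_{\opt}\times E$ and identify the resulting partition function with the appropriate twisted Jacobi form; this is the principal technical difficulty, since the Brauer class forces the use of twisted universal families and constrains the admissible Mukai vectors through the $\mu_{o_{\opt}}$-weight decomposition $\Coh(\SS)=\bigoplus_i \Coh(\SS)_i$. Second, one must reconcile the Behrend-weighted or motivic invariants produced by the Oberdieck argument with the Poincar\'e-polynomial definition of $J(v)$ used above; for K3 gerbes this parallels \cite{MT} in the untwisted case, ultimately resting on the fact that Hilbert schemes of points on $\XX_{\opt}$ reduce to those on $\SS_{\opt}$ and, topologically, to those on $S$.
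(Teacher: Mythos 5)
Your outline matches, point for point, the strategy that this paper attributes to the companion work \cite{Jiang-Tseng_multiple}: a cyclic-gerbe version of Oberdieck's theorem on $\SS_{\opt}\times E$, a Maulik--Thomas-style identification/degeneration relating it to the non-compact local geometry $\XX_{\opt}=\SS_{\opt}\times\cc$, and a comparison of the Oberdieck-type invariants with the Behrend-weighted Joyce--Song invariants that define $J(v)$. Bear in mind, though, that the present paper does not actually prove the theorem; it is imported wholesale from \cite{Jiang-Tseng_multiple}, and the only justification offered here is the Remark immediately following the statement, which takes a much shorter route than yours: by \cite{H-St} a twisted sheaf on the local K3 gerbe $\XX_{\opt}$ is actually an untwisted sheaf on the local K3 surface $X$, with $v$ read off from the Chern character of that untwisted sheaf, so $J(v)$ is identified with an ordinary counting invariant on $S\times\cc$ and the formula is literally Toda's multiple cover formula as established in \cite{MT}. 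That shortcut avoids redoing the reduced obstruction theory and the Jacobi-form analysis in the twisted setting entirely; what your longer route (and the companion paper's) buys is an intrinsic argument on the gerbe that does not pass through untwisting the category, plus the twisted stable pair generating series on $\SS_{\opt}\times E$ as a by-product. So your proposal is a faithful reconstruction of the intended proof rather than a gap, but you could flag that, granting the Huybrechts--Stellari identification, the statement also follows in one line from the untwisted case.
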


\begin{rmk}
One can understand this twisted multiple cover formula as follows.  From \cite{H-St},  a twisted sheaf on the local K3 gerbe $\XX_{\opt}$ is actually an untwisted sheaf and the Mukai vector $v$ is defined by the Chern character of this untwisted sheaf.  So the invariant $J(v)$ is actually understood as the counting invariant for untwisted sheaves on the local K3 surface $X$.  Thus the multiple cover formula is just the multiple cover formula of Y. Toda. 
\end{rmk}

\subsection{All ranks S-duality for K3 surfaces}\label{subsec_higher_rank_K3}

In this section we provide a calculation for  any  rank S-duality conjecture for K3 surfaces.  For a K3 surface $S$, the twisted Vafa-Witten invariants $\VW^{\tw}_{v}(\SS_{\opt})=\vw^{\tw}_{v}(\SS_{\opt})$ for any 
optimal $\mu_{o_{\opt}}$-gerbe $\SS_{\opt}\to S$ are the invariants $J(v)$ with the Mukai vector $v$. 
In general the invariants $\vw^{\tw}_{v}(\SS_{\opt})$ are the Joyce-Song invariants $\JS^{\tw}_{v}(\SS_{\opt})$
defined by the same element $\epsilon_{\omega,\XX_{\opt}}(v)$ but with the Behrend function on the moduli stack $\widehat{\sM}(\XX_{\opt})$.  Since from \cite[Proposition 5.9]{TT2}, on the $\cc^*$-fixed Higgs sheaves, the Behrend function is always $-1$ (the proof works for twisted Higgs pairs), the invariants  $J(v)$ are the same as the Joyce-Song invariants. 
Therefore we can use the multiple cover formula in Theorem \ref{thm_twisted_multiple_cover_intro} to calculate the partition function for any rank twisted Vafa-Witten invariants.

Let $S$ be a smooth projective K3 surface.  From \cite[Theorem 1.7]{TT2}, for any rank $r\in\zz_{>0}$, the  partition function of the Vafa-Witten invariants $\VW(S)$ was calculated as:
\begin{align}\label{eqn_TT_higher_rank_K3}
\Z_{r,\sO}(S, \SU(r); q)&= \sum_{c_2}\VW_{r,0,c_2}(S)q^{c_2}\\  \nonumber
&=\sum_{e|r}\frac{e}{r^2}q^r\sum_{m=0}^{e-1} G\left(e^{\frac{2\pi i m}{e}}q^{\frac{r}{e^2}}\right)\\ \nonumber
&=q^r\cdot \frac{1}{r^2}\cdot G(r\cdot \tau)+\sum_{\substack{e|r\\e\neq 1}}\frac{e}{r^2}q^r\left( G(\frac{r\tau}{e^2})+ \sum_{m=1}^{e-1} G\left(\frac{m+\frac{r}{e}\tau}{e}\right)\right).
\end{align}
Now using the formula in \cite[Formula A.4]{LL},  (note that there are some typos in the formula) we calculate after the $S$-transformation $\tau\mapsto -\frac{1}{\tau}$,
\begin{equation}\label{eqn_S_transformation1}
\begin{array}{ll}
G(r\cdot \tau)\mapsto & \tau^{-12}r^{12}\cdot G(\frac{\tau}{r});\\ 
G(\frac{r}{e^2}\cdot \tau)\mapsto &  \tau^{-12}\left(\frac{r}{e^2}\right)^{12}\cdot G(\frac{e^2}{r}\tau);\\ 
G\left(\frac{\frac{r}{e}\tau+m}{e}\right)\mapsto &  \tau^{-12}\left(\frac{\overline{d}}{d}\right)^{12}\cdot G\left(\frac{d\tau}{\overline{d} o_{\opt}}+\frac{ds}{d\cdot o_{\opt}}\right);\\
\end{array}
\end{equation}
where in the third formula above we have:
$$1\leq s<o_{\opt}; \quad  
d:=\gcd(m,e); \quad  \overline{d}=\frac{r}{e}$$
and $s$ satisfies 
$$s \cdot \frac{m}{d}\equiv -1\mod (\frac{e}{d})$$
and we let 
$o_{\opt}:=\frac{e}{d}$. 
 We calculate that for all the  $m$'s satisfying  $1\leq m\leq e-1$ and  $d=\gcd(m,e)$, if $d=1$, then $\frac{ds}{d\cdot o_{\opt}}$ will take all the values $\frac{s}{e}$
with $\gcd(s,e)=1$; and if $d>1$, then $\frac{ds}{d\cdot o_{\opt}}$ will take all the values 
with $\gcd(ds,e)=d$, i.e., $s$ takes all the values in $[1,o_{\opt}-1]$.

Therefore after the $S$-transformation $\tau\mapsto -\frac{1}{\tau}$, $\Z_{r,\sO}(S, \SU(r); q)$ becomes:
\begin{align}\label{eqn_above_1}
&q^r\cdot \frac{1}{r^2}\cdot \Big[ \tau^{-12}r^{12}\cdot G\left(\frac{\tau}{r}\right) \Big]+
\sum_{\substack{e|r\\e\neq 1}}
\Big[\frac{e}{r^2}q^r\cdot  \tau^{-12}\left(\frac{r}{e^2}\right)^{12}\cdot G\left(\frac{e^2}{r}\tau\right) \\ \nonumber
&+ \sum_{\substack{1\leq m\leq e-1,\\
 (m,e)=1}}\frac{e}{r^2}q^r\cdot  \tau^{-12}\overline{d}^{12}\cdot G\left(\frac{\tau}{r}+\frac{e-m}{e}\right)+
 \sum_{\substack{1\leq m\leq e-1,\\
  (m,e)=d>1}}\frac{e}{r^2}q^r\cdot  \tau^{-12}\left(\frac{\overline{d}}{d}\right)^{12}\cdot G\left(\frac{d\tau}{\overline{d} o_{\opt}}+\frac{ds_m}{d\cdot o_{\opt}}\right)
\Big]
\end{align}
where $s_m$ is the corresponding $s$ in (\ref{eqn_S_transformation1}). We may write it in more detail:
\begin{align*}
&q^r\cdot \frac{1}{r^2}\cdot \Big[ \tau^{-12}r^{12}\cdot G\left(\frac{\tau}{r}\right) \Big]+
\sum_{\substack{e|r\\e\neq 1, e\neq r}}
\Big[\frac{e}{r^2}q^r\cdot  \tau^{-12}\left(\frac{r}{e^2}\right)^{12}\cdot G\left(\frac{e^2}{r}\tau\right) \\
&+ \sum_{\substack{1\leq m\leq e-1,\\
 (m,e)=1}}\frac{e}{r^2}q^r\cdot  \tau^{-12}\overline{d}^{12}\cdot G\left(\frac{\tau}{r}+\frac{e-m}{e}\right)+
 \sum_{\substack{1\leq m\leq e-1, \\
  (m,e)=d>1}}\frac{e}{r^2}q^r\cdot  \tau^{-12}\left(\frac{\overline{d}}{d}\right)^{12}\cdot G\left(\frac{d\tau}{\overline{d} o_{\opt}}+\frac{ds_m}{d\cdot o_{\opt}}\right)
\Big]\\
&+
\Big[\frac{1}{r}q^r\cdot  \tau^{-12}\left(\frac{1}{r}\right)^{12}\cdot G\left(r\tau\right) 
+ \sum_{\substack{1\leq m\leq r-1,\\
 (m,r)=1}}\frac{1}{r}q^r\cdot  \tau^{-12}\cdot G\left(\frac{\tau}{r}+\frac{r-m}{r}\right)\\
&+
 \sum_{\substack{1\leq m\leq r-1,\\
 (m,r)=d>1}}\frac{1}{r}q^r\cdot  \tau^{-12}\left(\frac{1}{d}\right)^{12}\cdot G\left(\frac{d\tau}{o_{\opt}}+\frac{s_m}{ o_{\opt}}\right)\Big]
\end{align*}
where the last sum in the bracket $\Big[,\Big]$ is the term when $e=r$ in (\ref{eqn_above_1}). 

So from conjectural equation (\ref{eqn_S_transformation}),  we modify here a bit, and let $o_{\opt}|r$ in the above formula represent the order of the group element 
in $H^2(S,\mu_r)=\zz_r^{22}$, and in the S-duality conjecture this represents the group $\SU(o_{\opt})/\zz_{o_{\opt}}$,  we 
multiply $o_{\opt}^{11}$. In  the terms involving $m$ such that $(m,e)=d>1$, we multiply $(\frac{e}{\overline{d}})^{11}$, and for  all the other terms we multiply $r^{11}$.   We have the predicted formula for $\Z^\prime_{r,\sO}(S, \SU(r)/\zz_r; q)$:
(we use the notation $\Z^\prime_{r,\sO}(S, \SU(r)/\zz_r; q)$ in order to distinguish from the partition function $\Z_{r,\sO}(S, \SU(r)/\zz_r; q)$ defined by all $\mu_r$-gerbes later.)
\begin{align}\label{eqn_SU(r)_Z_r_formula_K3_intro}
&\Z^\prime_{r,\sO}(S, \SU(r)/\zz_r; q)= \\ \nonumber
&\Big[q^r\cdot r^{21}\cdot G\left(q^{\frac{1}{r}}\right) \Big]+
\sum_{\substack{e|r\\e\neq 1, e\neq r}}
\Big[q^r\cdot \left(\frac{r^{21}}{e^{21}}\right)\cdot \frac{1}{e^2}\cdot G\left(q^{\frac{e^2}{r}}\right) \\ \nonumber
&+ \sum_{\substack{1\leq m\leq e-1,\\
 (m,e)=1}}q^r\cdot r^{10}\cdot G\left(e^{2\pi i\frac{e-m}{e}} q^{\frac{1}{r}}\right)+
 \sum_{\substack{1\leq m\leq e-1,
\\ (m,e)=d>1\\
 e=d\cdot o_{\opt}}}q^r\cdot o_{\opt}^{10}\cdot\frac{1}{d^2}\cdot \frac{1}{\overline{d}}\cdot  G\left(e^{2\pi i\frac{s}{o_{\opt}}} q^{\frac{d}{\overline{d}\cdot o_{\opt}}}\right)\Big] \\ \nonumber
&+\Big[q^r\cdot \frac{1}{r^2} G(q^{r})
+ \sum_{\substack{1\leq m\leq r-1, \\
(m,r)=1}}q^r\cdot  r^{10}\cdot G\left(e^{2\pi i\frac{r-m}{r}} q^{\frac{1}{r}}\right)+
 \sum_{\substack{1\leq m\leq r-1,\\
  (m,r)=d>1}}q^r\cdot  \left(\frac{r}{d}\right)^{10}\cdot \frac{1}{d^2}\cdot G\left(e^{2\pi i\frac{s}{o_{\opt}}} q^{\frac{r}{o^2_{\opt}}}\right)\Big]
\end{align}

Note that if $d\cdot e=r$, then $\frac{d^2}{r}=\frac{r}{e^2}$.
We write further (combining the terms involving $q^{\frac{1}{r}}$ in the above three rows) for (\ref{eqn_SU(r)_Z_r_formula_K3_intro}):
\begin{align}\label{eqn_SU(r)_Z_r_formula_K3_2_intro}
&\Z^\prime_{r,\sO}(S, \SU(r)/\zz_r; q)=\Big[q^r\cdot \frac{1}{r^2} G(q^{r})\Big]+ q^r\cdot r^{21} \cdot G(q^{\frac{1}{r}})+q^{r}\cdot r^{10}\cdot \sum_{m=1}^{r-1} G\left(e^{2\pi i \frac{m}{r}}\cdot q^{\frac{1}{r}}\right)
 \\ \nonumber
&+
\sum_{\substack{d|r\\d\neq 1, d\neq r}}
\Big[q^r\cdot \left(\frac{r^{21}}{d^{21}}\right)\cdot \frac{1}{d^2}\cdot G\left(q^{\frac{r}{o_{\opt}^2}}\right)\Big] \\ \nonumber
&+
\sum_{\substack{e|r\\
e\neq 1,e\neq r}} \Big[\sum_{\substack{1\leq m\leq e-1,\\
 (m,e)=d>1}}q^r\cdot  o_{\opt}^{10}\cdot\frac{1}{d^2}\cdot \frac{1}{\overline{d}}\cdot G\left(e^{2\pi i\frac{s}{o_{\opt}}} q^{\frac{d}{\overline{d}\cdot o_{\opt}}}\right)\Big] \\ \nonumber
&+
 \sum_{\substack{1\leq m\leq r-1,\\
 (m,r)=d>1}}q^r\cdot  \left(\frac{r}{d}\right)^{10}\cdot \frac{1}{d^2}\cdot G\left(e^{2\pi i\frac{s}{o_{\opt}}} q^{\frac{r}{o^2_{\opt}}}\right)
\end{align}
where in $\sum_{\substack{d|r\\d\neq 1, d\neq r}}
q^r\cdot \left(\frac{r^{21}}{d^{21}}\right)\cdot \frac{1}{d^2}\cdot G\left(q^{\frac{r}{o_{\opt}^2}}\right)$ we count all 
$d\cdot o_{\opt}=r$.

\begin{rmk}
If $r$ is a prime number, then we have $e=1$ or $r$, we calculate after the $S$-transformation $\tau\mapsto -\frac{1}{\tau}$,
$$
\begin{array}{ll}
G(r\cdot \tau)\mapsto & \tau^{-12}r^{12}\cdot G(\frac{\tau}{r});\\ 
G(\frac{1}{r}\cdot \tau)\mapsto &  \tau^{-12}\left(\frac{1}{r}\right)^{12}\cdot G(r \tau);\\ 
G\left(\frac{\tau+m}{r}\right)\mapsto &  \tau^{-12}\cdot G\left(\frac{\tau+h}{r}\right), 1\leq m\leq r-1, mh\equiv -1\mod r;\\
\end{array}
$$
which is induced  from (\ref{eqn_S_transformation1}). 
We have:
$$\Z^\prime_{r,\sO}(S, \SU(r)/\zz_r; q)=\frac{1}{r^2}q^r G(q^r)+q^r\left(r^{21} G(q^{\frac{1}{r}})+r^{10} \left(\sum_{j=1}^{r-1}G\left(e^{\frac{2\pi i j}{r}}q^{\frac{1}{r}}\right)\right) \right).
$$
which is exactly Theorem 6.38 in \cite{Jiang_2019}. 
Therefore if the  rank $r$  is not prime,  the partition function $\Z^\prime_{r,\sO}(S, \SU(r)/\zz_r; q)$ has three extra terms in (\ref{eqn_SU(r)_Z_r_formula_K3_2_intro}).
These three terms were not known in physics. 
\end{rmk}

Our main result for the twisted Vafa-Witten invariants is:

\begin{thm}\label{thm_higher_rank_K3_intro}(Theorem \ref{thm_SUr/Zr_Vafa-Witten_K3})
Let $S$ be a smooth projective complex K3 surface and $r\in\zz_{>0}$.  We define $\Z^\prime_{r,\sO}(S, \SU(r)/\zz_r; q)$ as:
\begin{align*}
Z^{\prime}_{r,\sO}(S, \SU(r)/\zz_r;q):=Z_{r,0}(q)+\sum_{\substack{o_{\opt}|r, \\
o_{\opt\neq 1}}}\sum_{\substack{0\neq g\in H^2(S,\mu_{o_{\opt}})}}\sum_{m=0}^{o_{\opt}-1}e^{\pi i \frac{o_{\opt}-1}{o_{\opt}} m g^2}
Z^\prime_{r,\sO}(\SS_{\opt}, (e^{\frac{2\pi i m}{o_{\opt}}})^{o_{\opt}}\cdot q)
\end{align*}
where $Z_{r,0}(q)$ is the  Tanaka-Thomas partition function for $g=0\in H^2(S,\mu_r)$ and 
$Z^\prime_{r,\sO}(\SS_{\opt}, (e^{\frac{2\pi i m}{o_{\opt}}})^{o_{\opt}}\cdot q)$ is defined as  part of the  partition function of the twisted Vafa-Witten invariants 
for the optimal $\mu_{\opt}$-gerbe $\SS_{\opt}\to S$, see Definition \ref{defn_Zprime_opt}; and  
$g\in H^2(S,\mu_r)$ such that $|g|=o_{\opt}$, then $\Z^\prime_{r,\sO}(S, \SU(r)/\zz_r; q)$ has the formula (\ref{eqn_SU(r)_Z_r_formula_K3_2_intro}). 
Thus we generalize and prove the S-duality conjecture of Vafa-Witten in any rank for K3 surfaces. 
\end{thm}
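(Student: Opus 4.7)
The overall strategy is to compute the twisted Vafa-Witten partition function on each optimal $\mu_{o_{\opt}}$-gerbe $\SS_{\opt}\to S$ via the twisted multiple cover formula of Theorem \ref{thm_twisted_multiple_cover_intro}, then assemble the answer by summing over gerbe classes $g\in H^2(S,\mu_r)$ according to Definition \ref{defn_SU/Zr_VW_intro}, and finally compare with formula (\ref{eqn_SU(r)_Z_r_formula_K3_2_intro}) which was obtained by applying the $S$-transformation $\tau\mapsto -1/\tau$ to the Tanaka-Thomas expression (\ref{eqn_TT_higher_rank_K3}). First I would set up the calculation for a fixed optimal $\mu_{o_{\opt}}$-gerbe $\SS_{\opt}\to S$ with Brauer class $\alpha$ of order $o_{\opt}\mid r$. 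Because, on $\cc^*$-fixed twisted Higgs pairs, the Behrend function equals $-1$, the invariants $\VW^{\tw}_{v}(\SS_{\opt})=\vw^{\tw}_{v}(\SS_{\opt})$ coincide with the Joyce-Song invariants $J(v)$ for the category $\Coh^{\tw}(\XX_{\opt})$ on the local twisted K3. Theorem \ref{thm_twisted_multiple_cover_intro} then gives
\[
J(v)=\sum_{k\geq 1,\,k\mid v}\frac{1}{k^2}\chi\!\left(\Hilb^{\langle v/k,v/k\rangle+1}(S)\right),
\]
so, after summing a geometric series with an Euler character of Hilbert schemes, the generating function $Z_{r,\sO}(\SS_{\opt},q)$ for Mukai vectors $v$ of rank $r$ and trivial determinant becomes a sum of specializations of $G(q)=\eta(q)^{-24}$ indexed by divisors of $\gcd(r,o_{\opt})$; this is the direct twisted analogue of the Toda/Maulik-Thomas calculation used in \cite{MT} and produces precisely the ingredient $Z^\prime_{r,\sO}(\SS_{\opt},(e^{2\pi i m/o_{\opt}})^{o_{\opt}}q)$ of the definition.

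Next I would perform the sum over $g\in H^2(S,\mu_r)$. Decompose this sum by the order $o_{\opt}$ of $g$, so each $o_{\opt}\mid r$ contributes via optimal $\mu_{o_{\opt}}$-gerbes. The twist $e^{\pi i\frac{o_{\opt}-1}{o_{\opt}}mg^2}$ is exactly what is needed so that the inner sum over $m$ collects the $o_{\opt}$-fold cover contributions coming from the multiple cover formula; this is parallel to the prime rank argument in \cite[Theorem 6.38]{Jiang_2019} but with several nested layers because the divisors $d\mid\overline{d}=r/o_{\opt}$ now appear. I would track three types of terms produced by the specializations $q\mapsto q^{1/r},\,q^{r/o_{\opt}^2},\,q^{d/(\overline{d}\,o_{\opt})}$ arising in (\ref{eqn_S_transformation1}), and verify that, after summing $g$ over $H^2(S,\mu_{o_{\opt}})\cong\zz_{o_{\opt}}^{22}$ (whose cardinality produces the $r^{21}$, $o_{\opt}^{10}$, $r^{10}$ weighting factors via the $22$-dimensional quadratic Gauss sum of the Mukai pairing mod $o_{\opt}$), one reproduces exactly the three groups of terms on the right hand side of (\ref{eqn_SU(r)_Z_r_formula_K3_2_intro}). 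Finally, the $g=0$ term is by definition the Tanaka-Thomas contribution $Z_{r,0}(q)$, which $S$-dualizes to the leading $q^r\cdot r^{-2}G(q^r)$ piece.

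The combinatorial heart of the proof — and the main obstacle — is matching the four sums appearing in (\ref{eqn_SU(r)_Z_r_formula_K3_2_intro}) with the output of the gerbe sum for each divisor $o_{\opt}\mid r$. Concretely one must show that for $1\leq m\leq e-1$ with $\gcd(m,e)=d>1$, the residues $\frac{ds_m}{d\,o_{\opt}}$ (with $s_m$ the modular inverse from (\ref{eqn_S_transformation1})) run over the same set of roots of unity $e^{2\pi i s/o_{\opt}}$ with $\gcd(s,o_{\opt})=1$ that are indexed by the characters $e^{\pi i\frac{o_{\opt}-1}{o_{\opt}}mg^2}$ summed over $g$ of order $o_{\opt}$. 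This requires a careful quadratic Gauss sum evaluation on the unimodular lattice $H^2(S,\zz)$ mod $o_{\opt}$, together with the bijection between $H^2(S,\mu_{o_{\opt}})$ orbits and pairs $(d,o_{\opt})$ with $d\cdot o_{\opt}=e\mid r$; once this bijection is established, the term-by-term identification with the $S$-transformed Tanaka-Thomas formula is forced, completing the proof.
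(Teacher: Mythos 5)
Your proposal is correct and follows essentially the same route as the paper: compute $Z_{r,\sO}(\SS_{\opt};q)$ on each optimal gerbe from the twisted multiple cover formula (using that the Behrend function is $-1$ on the $\cc^*$-fixed locus), sum over $g\in H^2(S,\mu_r)$ stratified by the order $o_{\opt}=|g|$, evaluate the rank-$22$ quadratic Gauss sum $\sum_{|g|=o_{\opt}}e^{\pi i\frac{o_{\opt}-1}{o_{\opt}}sg^2}=(\epsilon(s))^{22}o_{\opt}^{11}$ to produce the weights $r^{21},r^{10},o_{\opt}^{10}$, and match term by term with the $S$-transformed Tanaka--Thomas expression, exactly as in the paper's proof of Theorem \ref{thm_SUr/Zr_Vafa-Witten_K3}. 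The only slip is cosmetic: for a fixed optimal $\mu_{o_{\opt}}$-gerbe the specializations of $G$ are indexed by divisors $d$ with $d\cdot o_{\opt}\mid r$ (i.e.\ divisors of $r/o_{\opt}$), not by divisors of $\gcd(r,o_{\opt})$, which does not affect the argument.
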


\subsection{Outline} 

After the detail explanation of the S-duality conjecture and twisted Vafa-Witten invariants in the intrudction,  in 
\S \ref{sec_twisted_VW_S-duality_K3} we use twisted multiple cover formula to calculate any rank twisted Vafa-Witten invariants for K3 surfaces, and prove the S-duality conjecture.

\subsection{Convention}
We work over complex number  $\cc$   throughout of the paper.    We use Roman letter $E$ to represent a coherent sheaf on a projective DM stack or an \'etale gerbe  $\SS$, and use curly letter $\sE$ to represent the sheaves on the total space 
Tot$(\sL)$ of a line bundle $\sL$ over $\SS$. 
We reserve {\em $\rk$} for the rank of the torsion free coherent sheaves $E$, and when checking the S-duality for $\SU(r)/\zz_r$, $r=\rk$.
We keep the convention in \cite{VW} to use $\SU(r)/\zz_r$ as the Langlands dual group of $\SU(r)$.

\subsection*{Acknowledgments}

Y. J. would like to thank  Kai Behrend,  Amin Gholampour, Martijn Kool,  and   Richard Thomas for valuable discussions on the Vafa-Witten invariants, and Yukinobu Toda for the discussion of the multiple cover formula for K3 and twisted K3 surfaces.  Y. J. is partially supported by  NSF DMS-1600997. H.-H. T. is supported in part by Simons foundation collaboration grant.



\section{Proof of the S-duality conjecture Theorem \ref{thm_higher_rank_K3_intro}}\label{sec_twisted_VW_S-duality_K3}

In this section we apply the multiple cover formula for twisted sheaves on twisted K3 surface to calculate any rank twisted Vafa-Witten invariants for K3 gerbes and the $\SU(r)/\zz_r$-Vafa-Witten invariants.  Thus we prove the Vafa-Witten S-duality conjecture  for K3 surfaces in any rank. 

\subsection{Twisted Vafa-Witten invariants counting semistable Higgs pairs}\label{subsec_semistable_twisted_Higgs_pair}

The theory of twisted Vafa-Witten invariants has been constructed in \cite{Jiang_2019} for $\mu_r$-gerbes $\SS\to S$.  We review the construction of counting invariants of
geometric semistable twisted Higgs sheaves for K3 surfaces and use the invariants to calculate the higher rank twisted Vafa-Witten invariants. 
We fix 
$$\XX=\SS\times\cc=\Tot(K_{\SS})\to X$$
which is a $\mu_r$-gerbe over $X=S\times\cc=\Tot(K_S)$ with class $[\XX]=\alpha\in H^2(X,\mu_r)\cong H^2(S,\mu_r)$.  We fix a generating sheaf $\Xi$ on $\SS$ such that 
$\pi^*\Xi$ (still denote it by $\Xi$) is a generating sheaf on $\XX$, where $\pi: \XX\to \SS$ is the projection.  We use the Gieseker (semi)stability of twisted sheaves using the generating sheaf 
$\Xi$, which is equivalent to the geometric stability of Lieblich for twisted sheaves, see \cite{Jiang_2019}.

Let $(E,\phi)$ be a $\SS$-twisted Higgs sheaf on a $\mu_r$-gerbe $\SS\to S$. From \cite[Proposition 2.18]{JP}, there is a spectral $\XX$-twisted sheaf
$\sE_{\phi}$ on $\XX=\Tot(K_{\SS})$ with respect to the polarization $\sO_X(1)=\pi^*\sO_S(1)$.
Also  the Gieseker (semi)stability of the twisted Higgs pair $(E,\phi)$ is equivalent to the Gieseker 
 (semi)stability of $\sE_{\phi}$.  Then we have the Hall algebra 
  $H(\sA^{\tw})$, where $\sA^{\tw}=\Coh^{\tw}_{\pi}(\XX)$ is the category of twisted sheaves.  
  
We review a bit for the definition of the invariants $\JS^{\tw}(v)$.  We are interested in the elements:
$$\mathbb{1}_{\N^{ss,\tw}_{\textbf{c}}}:   \N^{ss,\tw}_{\textbf{c}}\hookrightarrow \Higg^{\tw}_{K_{\SS}}(\SS)\cong \Coh_c^{\tw}(\XX),$$
where $\N^{ss,\tw}_{\textbf{c}}$ is the stack of Gieseker semistable $\XX$-twisted Higgs sheaves $(E,\phi)$ of class $\textbf{c}\in K_0(\XX)$, and 
$\mathbb{1}_{\N^{ss,\tw}_{\textbf{c}}}$ is the inclusion into the stack of all twisted Higgs pairs on $\SS$. 
We consider its ``logarithm":
\begin{equation}\label{eqn_epsilon_alpha}
\epsilon(\textbf{c}):=\sum_{\substack{\ell\geq 1, (\textbf{c}_i)_{i=1}^{\ell}: \textbf{c}_i\neq 0, \forall i\\
p_{\textbf{c}_i}=p_{\textbf{c}},\sum_{i=1}^{\ell}\textbf{c}_i=\textbf{c}}} \frac{(-1)^{\ell}}{\ell}\mathbb{1}_{\N_{\textbf{c}_1}^{ss,\tw}}*\cdots *\mathbb{1}_{\N_{\textbf{c}_\ell}^{ss,\tw}}.
\end{equation}
The key point is that $\epsilon(\textbf{c})$ is virtually indecomposable in the Hall algebra, see 
\cite[Theorem 8.7]{Joyce03}. There is a proof using operators on inertia stacks, see \cite{BP}.
Then the  $\epsilon(\textbf{c})$ is a stack function with algebra stabilizers,
$$\epsilon(\textbf{c})\in \overline{\text{SF}}^{\ind}_{\text{al}}(\Coh^{\tw}_c(\XX),\qq)$$
in Joyce's notation.  From \cite[Proposition 3.4]{JS}, $\epsilon(\textbf{c})$ can be written as 
$$\sum_i a_i\cdot Z_i\times B\Gm$$ 
where $a_i\in \qq$, and $B\Gm$ is the classifying stack.  After removing the factor $B\Gm$ one can pull back the Behrend function to write:
\begin{equation}\label{eqn_epsilon_alpha_stack_function}
\epsilon(\textbf{c}):=\sum_{i}a_i\left(f_i: Z_i\times B\Gm\to \Coh_c^{\tw}(\XX)\right).
\end{equation}
Then \cite[Eqn 3.22]{JS} defines generalized Donaldson-Thomas invariants 
\begin{equation}\label{eqn_generalized_JSalpha}
\JS^{\tw}_{\textbf{c}}(\XX):=\sum_{i} a_i \chi(Z_i, f_i^*\nu)\in \qq
\end{equation}
where $\nu: \sM^{\tw}_{\XX/\kappa}\to \zz$ is the Behrend function.  This invariants are defined by applying the integration map 
to the element $\epsilon(\textbf{c})$ in (\ref{eqn_epsilon_alpha}).  More details of the integration map can be found in \cite{Joyce07}, \cite{JS} and \cite{Bridgeland10}.  
For DM stacks, see \cite{Jiang_DT_Flop}.

As explained in \cite[\S 2]{TT2}, the $\Gm$-action on $\XX\to \SS$ induces an action on the moduli stack 
$\sM^{\tw}_{\XX/\kappa}$ of twisted sheaves on $\XX$.  The $\Gm$-action can be extended to the Hall algebra $H(\sA^{\tw})$, and 
hence $\epsilon(\textbf{c})$ carries a $\Gm$-action covering the one on 
$\Coh_c^{\tw}(\XX)$. Also in the decomposition
$$\epsilon(\textbf{c}):=\sum_{i} a_i\left( f_i: Z_i\times B\Gm\to \Coh_c^{\tw}(\XX)\right),$$
the $Z_i$'s admit $\Gm$-action so that they are $\Gm$-equivariant and the proof uses Kresch's stratification of finite type algebraic stacks
with affine geometric stabilizers. Then the Behrend function is also $\Gm$-equivariant, and 
\begin{equation}\label{eqn_generalized_JSalpha_Gm}
\JS^{\tw}_{\textbf{c}}(\XX)=(\JS^{\tw}_{\textbf{c}}(\XX))^{\Gm}(\XX)=\sum_{i} a_i \chi(Z_i^{\Gm}, f_i^*\nu|_{Z_i^{\Gm}})\in \qq
\end{equation}

\subsubsection{Joyce-Song twisted stable pairs}\label{subsubsec_Joyce-Song_twisted_pair}

We need to use Joyce-Song pair for twisted sheaves in \cite{Jiang_2019}.  
For the $\mu_r$-gerbe $p: \XX\to X$, we fix a minimal rank $[\XX]$-twisted locally free  sheaf $\xi$ such that $\Xi=\oplus_{i=0}^{r-1}\xi^{\otimes i}$ is a generating sheaf for $\XX\to X$.  
A generating sheaf $\Xi$ on $\XX$ is $p$-very ample and contains every irreducible representations of the cyclic group $\mu_r$.
Recall that 
the functor $p_*:  \Coh(\XX)\to \Coh(X)$ is exact. 

Fixing a $K$-group class $\textbf{c}\in K_0(\XX)\cong K_0(\SS)$, for $m>>0$, a Joyce-Song twisted pair consists of the following data
\begin{enumerate}
\item a compactly supported $\XX$-twisted coherent sheaf $\sE$ with $K$-group class $\textbf{c}\in K_0(\XX)$, and \\
\item a nonzero morphism  $s: p_*\Xi\to p_* \sE(m)$.
\end{enumerate}

\begin{defn}\label{defn_stable_JS_twisted_pair}
A Joyce-Song twisted pair $(\sE,s)$ is stable if and only if 
\begin{enumerate}
\item $\sE$ is modified  Gieseker semistable with respect to $(\Xi, \sO_S(1))$.\\
\item if $\sF\subset \sE$ is a proper subsheaf which destabilizes $\sE$, then $s$ does not factor through $p_*\sF(m)\subset p_*\sE(m)$.
\end{enumerate}
Such a pair is denoted by $I^\bullet=\{\Xi(-m)\to \sE\}$.
\end{defn}

For any $\XX$-twisted coherent sheaf 
$\sE$ on $\XX$, we have $H^{\geq 1}(p_*\sE(m))=0$  for $m>>0$.     We then take 
$\sP^{\tw}:=\sP^{\tw}_{\textbf{c}=(\rk,L,c_2)}(\XX)$ to be the moduli stack of $\XX$-twisted stable Joyce-Song pairs 
$$I^\bullet=\{\Xi(-m)\to \sE\}$$
on $\XX$.  
This moduli stack exists since it is the moduli stack of simple complexes on the Deligne-Mumford stack $\XX$, see \cite{Andreini}, \cite{JS}.
The moduli stack 
$\sP^{\tw}$ is not compact, but admits a symmetric obstruction theory since we can take it as the moduli space of simple complexes on a Calabi-Yau threefold Deligne-Mumford stack $\XX$. 
The invariant we use is Behrend's weighted Euler characteristic 
$\widetilde{\sP}^{\tw}_{(\rk,L,c_2)}(m):=\chi(\sP^{\tw}, \nu_{\sP})$.  Under the $\Gm$-action:
$$\widetilde{\sP}^{\tw}_{(\rk,L,c_2)}(m)=\widetilde{\sP}^{\tw,\Gm}_{(\rk,L,c_2)}(m)=\chi\left(\sP^{\Gm}, \nu_{\sP}|_{\sP^{\Gm}}\right).$$
For generic polarization, Joyce-Song invariants $\JS^{\tw}_{\textbf{c}}(\XX)\in \qq$ satisfy the following identities \cite[Theorem 5.27]{JS}:
\begin{equation}\label{eqn_Joyce-Song_wall_crossing}
\widetilde{\sP}^{\tw}_{(\rk,L,c_2)}(m)=\sum_{\substack{\ell\geq 1, (\textbf{c}_i=\delta_i \textbf{c})_{i=1}^{\ell}: \\
\delta_i>0,\sum_{i=1}^{\ell}\delta_i=1}} \frac{(-1)^{\ell}}{\ell !}\prod_{i=1}^{\ell}
(-1)^{\chi(\textbf{c}_i(m))}\cdot \chi(\textbf{c}_i(m))\cdot \JS^{\tw}_{\textbf{c}_i}(\XX).
\end{equation}
Since $\widetilde{\sP}^{\tw}_{(\rk,L,c_2)}(m)$ is deformation invariant, $\JS_{\textbf{c}}^{\tw}$ is also deformation invariant. 
For general 
$\sO_{S}(1)$, the wall crossing formula is complicated as in \cite[Theorem 5.27]{JS}.  
If semistability coincides with stability for twisted sheaves $\sE$, the moduli stack $\sP^{\tw}_{(\rk,L,c_2)}$ is a $\pp^{\chi(\textbf{c}(m))-1}$-bundle over the moduli stack 
$\N^{\tw}_{(\rk, L, c_2)}$ of $\XX$-twisted torsion sheaves $\sE$.  Here $\chi(\textbf{c}(m))=\dim(\Hom(\Xi, \sE(m)))$.
The Behrend function $\nu_{\sP}$ of $\sP^{\tw}_{(\rk,L,c_2)}(\XX)$  is the pullback of the Behrend function on 
$\N^{\tw}_{(\rk, L, c_2)}$ multiplied by $(-1)^{\chi(\textbf{c}(m))-1}$. Therefore: 
\begin{equation}\label{eqn_Joyce-Song_wall_crossing_stable}
\widetilde{\sP}^{\tw}_{(\rk,L,c_2)}(m)=(-1)^{\chi(\textbf{c}(m))-1}\cdot \chi(\textbf{c}(m))\cdot \widetilde{\vw}_{(\rk, L, c_2)}(\SS)
\end{equation}
which is the first term $\ell=1$ in (\ref{eqn_Joyce-Song_wall_crossing}). In general the formula expresses $\widetilde{\sP}^{\tw}_{(\rk,L,c_2)}(m)$ in terms of rational corrections 
from semistable $\XX$-twisted sheaves $\sE$ on $\XX$. 

\subsubsection{Generalized twisted $\SU(\rk)$-Vafa-Witten $\vw$-invariants}

For the $\mu_r$-gerbe $\SS\to S$, if $h^{0,1}(S)>0$, then $\widetilde{\vw}^{\tw}$ defined before vanish because of the action of $\Jac(S)$.
We follow from \cite{TT2} to fix the determinant $\det(E)$. 

Let us fix a line bundle $L\in \Pic(\SS)$, and use the map:
$$\Coh_c^{\tw}(\XX)\stackrel{\det\circ \pi_*}{\longrightarrow}\Pic(\SS).$$
Let us define 
$$\Coh_c^{\tw}(\XX)^{L}:=(\det\circ \pi_*)^{-1}(L).$$
We can restrict Joyce's stack function to this restricted category.  
For any stack function $F:=\left(f: U\to \Coh_c^{\tw}(\XX)\right)$ we define:
$$F^{L}=\left(f: U\times_{\tiny\Coh_c^{\tw}(\XX)}\Coh_c^{\tw}(\XX)^{L}\to \Coh_c^{\tw}(\XX)\right)$$
which is $\mathbb{1}_{\tiny\Coh_c^{\tw}(\XX)^{L}}\cdot F$, where $\cdot$ is the ordinary product \cite[Definition 2.7]{JS}.
Then $\epsilon(\alpha)$ in (\ref{eqn_epsilon_alpha_stack_function}) becomes:
\begin{equation}\label{eqn_epsilon_alpha_L}
\epsilon(\textbf{c})^L:=\sum_{i} a_i\left( f_i: Z_i\times_{\tiny\Coh_c^{\tw}(\XX)} \Coh_c^{\tw}(\XX)^{L}/\Gm\to \Coh_c^{\tw}(\XX)\right).
\end{equation}
Then applying the integration map again and we get the fixed determinant generalized Donaldson-Thomas invariants:
$$\JS^{L,\tw}_{\textbf{c}}(\XX):=\sum_{i}a_i \chi\left(Z_i\times_{\tiny\Coh_c^{\tw}(\XX)} \Coh_c^{\tw}(\XX)^{L}, f_i^*\nu \right).$$
We also compute it using localization:
\begin{equation}\label{eqn_localized_JS_twisted}
\JS^{L,\tw}_{\textbf{c}}(\XX)=(\JS^{L,\tw}_{\textbf{c}})^{\Gm}(\XX)=\sum_{i}a_i \chi\left(Z^{\Gm}_i\times_{\tiny\Coh_c^{\tw}(\XX)} \Coh_c^{\tw}(\XX)^{L}, f_i^*\nu \right).
\end{equation}

\begin{defn}\label{defn_generalized_twisted_vw_L}
We define the twisted $\SU(\rk)$ generalized Vafa-Witten invariants as:
$$\vw^{\tw}_{(\rk, L,c_2)}(\SS):=(-1)^{h^0(K_{\SS})}\JS^{L,\tw}_{(\rk,L,c_2)}(\XX)\in \qq.$$
\end{defn}

As explained in \cite[\S 4]{TT2}, we did not restrict to the trace zero $\tr\phi=0$, but in the $\SU(\rk)$-moduli stack case we did.  The difference  is only a Behrend function sign $(-1)^{\dim H^0(K_{\SS})}$. 
If $h^{0,1}(S)=0$ and $\sO_S(1)$ is generic, then the wall crossing formula (\ref{eqn_Joyce-Song_wall_crossing}) becomes, see (\cite[Formula (5.35)]{Jiang_2019})
\begin{equation}\label{eqn_Joyce-Song_wall_crossing_L}
\widetilde{\sP}^{\tw}_{\textbf{c}}(m)=\sum_{\substack{\ell\geq 1, (\textbf{c}_i=\delta_i \textbf{c})_{i=1}^{\ell}: \\
\delta_i>0,\sum_{i=1}^{\ell}\delta_i=1}} \frac{(-1)^{\ell}}{\ell !}\prod_{i=1}^{\ell}
(-1)^{\chi(\textbf{c}_i(m))+h^0(K_{\SS})}\cdot \chi(\textbf{c}_i(m))\cdot \vw^{\tw}_{\textbf{c}_i}(\SS).
\end{equation}
If $h^{0,1}>0$, and $\sO_S(1)$ generic, we should use Joyce-Song pairs $(\sE,s)$ with fixed determinant
$$\det(\pi_*\sE)\cong L\in\Pic(\SS).$$
Joyce-Song use the category $\sB_{p_{\alpha}}$ before, and we apply the integration map
$\widetilde{\Psi}^{\sB_{p_{\alpha}}}$ to:
$$\epsilon^L_{\textbf{c},1}:=\overline{\epsilon}_{\textbf{c},1}\times_{\tiny\Coh_c^{\tw}(\XX)}\Coh_c^{\tw}(\XX)^L$$
where $\overline{\epsilon}_{\textbf{c},1}$ is the stack function (13.25), (13.26) in \cite[Chapter 13]{JS}.  
Since $\overline{\epsilon}_{\textbf{c},1}$ is virtually indecomposable, we get 
$$\sP^{\tw}_{\textbf{c}=(\rk,L,c_2)}(m):=\chi(\sP^{\tw}, \nu_{\sP}),$$
where $\sP^{\tw}:=\sP^{\tw}_{\textbf{c}}$ is the moduli stack of stable Joyce-Song twisted pairs $(\sE,s)$ with $\det(\pi_*\sE)=L$. 
Hence 
\begin{equation}\label{eqn_Palpha_weighted_Euler}
\sP^{\tw}_{\textbf{c}}(m)=\sP^{\Gm}_{\textbf{c}}(m)=\chi((\sP^{\tw})^{\Gm}_{\textbf{c}}, \nu_{\sP}|_{\sP^{\Gm}_{\textbf{c}}}).
\end{equation}
For  generic $\sO_S(1)$, we get (\cite[Proposition 4.4]{TT2}): if $h^{0,1}(S)>0$, the invariants $(\sP^{\tw})^{\Gm}_{\textbf{c}}(m)$ determines the twisted 
$\SU(\rk)$-Vafa-Witten invariants $\vw^{\tw}$ of Definition \ref{defn_generalized_twisted_vw_L} by:
\begin{equation}\label{eqn_Palpha_L_stable_case}
\sP^{\tw}_{\textbf{c}}(m)=(-1)^{h^0(K_{\SS})}(-1)^{\chi(\textbf{c}(m))-1}\chi(\textbf{c}(m))\vw^{\tw}_{\textbf{c}}(\SS).
\end{equation}
\begin{rmk}
The proof of (\ref{eqn_Palpha_L_stable_case}) is the same as \cite[Proposition 4.4]{TT2}, and the basic reason is that if $h^{0,1}(S)>0$, then the Jacobian $\Jac(S)$ is an abelian variety which acts on the moduli stacks to force the Euler characteristic to be zero. 
\end{rmk}

\subsection{Generalized twisted $\SU(\rk)$-Vafa-Witten invariants $\VW^{\tw}$}\label{subsec_generalized_twisted_VW}

Fix a $\mu_r$-gerbe $\SS\to S$, and the $\mu_r$-gerbe $\XX\to X$.
In this section we generalize the arguments in \cite[\S 6]{TT2} to twisted generalized $\SU(\rk)$-Vafa-Witten invariants $\VW^{\tw}$. 

For $m>>0$, fixing $\textbf{c}=(\rk,L,c_2)$, $L\in\Pic(\SS)$, the Joyce-Song twisted pair $(\sE, s)$ is defined in \S \ref{subsubsec_Joyce-Song_twisted_pair}, where $\sE$ is a $\XX$-twisted sheaf on $\XX$ corresponding to a $\SS$-twisted Higgs sheaf $(E,\phi)$ on $\SS$. Let 
$$\sP^{\perp, \tw}_{\textbf{c}}\subset \sP^{\tw}_{\textbf{c}}$$
be the moduli stack of twisted stable pairs with $\det(\pi_*\sE)=L$, $\tr\phi=0$. 
On $\sP^{\tw}_{\textbf{c}}$, there is a symmetric obstruction theory, see \cite[Chapter 12]{JS}. The tangent-obstruction complex is given by:
$R\cHom_{\XX}(I^\bullet, I^{\bullet})_0[1]$ at the point $I^{\bullet}=\{\Xi\stackrel{s}{\longrightarrow}\sE\}$.  By \cite[\S 5]{TT1}, or \cite[Appendix]{JP} we can modify it to a symmetric obstruction theory on 
$\sP^{\perp, \tw}_{\textbf{c}}$.  

We apply the $\Gm$-virtual localization \cite{GP} to define:
\begin{defn}\label{defn_JS_twisted_pair_invaraint}
$$\sP^{\perp,\tw}_{\textbf{c}}(m):=\int_{[(\sP^{\perp,\tw}_{\textbf{c}})^{\Gm}]^{\vir}}\frac{1}{e(N^{\vir})}.$$
\end{defn}
We mimic \cite{TT2} to conjecture:
\begin{con}\label{con_JS_wall_crossing_VW}(\cite[Conjecture 5.10]{Jiang_2019})
If $H^{0,1}(S)=H^{0,2}(S)=0$, there exist rational numbers $\VW^{\tw}_{\alpha_i}(\SS)$ such that 
$$
\sP^{\perp,\tw}_{\textbf{c}}(m)=\sum_{\substack{\ell\geq 1, (\alpha_i=\delta_i \alpha)_{i=1}^{\ell}: \\
\delta_i>0,\sum_{i=1}^{\ell}\delta_i=1}} \frac{(-1)^{\ell}}{\ell !}\prod_{i=1}^{\ell}
(-1)^{\chi(\textbf{c}_i(m))}\cdot \chi(\textbf{c}_i(m))\cdot \VW^{\tw}_{\textbf{c}_i}(\SS).
$$
for $m>>0$. When either of $H^{0,1}(S)$ or $H^{0,2}(S)$ is nonzero, we only take the first term in the sum:
$$\sP^{\perp,\tw}_{\textbf{c}}(m)=
(-1)^{\chi(\textbf{c}(m))-1}\cdot \chi(\textbf{c}(m))\cdot \VW^{\tw}_{\textbf{c}}(\SS).$$
\end{con}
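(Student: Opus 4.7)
The plan is to adapt the strategy of Tanaka--Thomas \cite{TT2} (executed in the untwisted setting) to the $\mu_r$-gerbe $\XX \to X$, leveraging the machinery already assembled in \S\ref{subsec_semistable_twisted_Higgs_pair} and \S\ref{subsec_generalized_twisted_VW}. The basic dictionary is: $\XX$-twisted Higgs pairs on $\SS$ correspond via the spectral construction to compactly supported $\XX$-twisted sheaves $\sE$ on $\Tot(K_{\SS})$, and the Gieseker stability is preserved. Because $\XX$ is a Calabi--Yau threefold Deligne--Mumford stack, the moduli stack $\sP^{\perp,\tw}_{\textbf{c}}$ of fixed-determinant, trace-free Joyce--Song twisted stable pairs carries a symmetric obstruction theory compatible with the $\Gm$-action rotating the fibers of $K_{\SS}$, exactly as in \cite[\S 5]{TT1}, \cite[Appendix]{JP}.

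First, I would work inside the motivic Hall algebra $H(\sA^{\tw})$ attached to $\sA^{\tw} = \Coh^{\tw}_\pi(\XX)$ and consider the ``logarithm'' element $\epsilon(\textbf{c})$ from (\ref{eqn_epsilon_alpha}), restricted to the fixed-determinant and trace-free substack as in (\ref{eqn_epsilon_alpha_L}). Virtual indecomposability of $\epsilon(\textbf{c})$ in $\overline{\mathrm{SF}}^{\mathrm{ind}}_{\mathrm{al}}$ (\cite[Theorem 8.7]{Joyce03}) lets one define $\VW^{\tw}_{\textbf{c}}(\SS) \in \qq$ via the integration map, but now weighted by the virtual cycle instead of the Behrend function --- this is the analog of passing from $\vw^{\tw}$ to $\VW^{\tw}$. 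The second step is to express $\sP^{\perp,\tw}_{\textbf{c}}(m)$ as an element of $H(\sA^{\tw})$: a stable Joyce--Song twisted pair $I^\bullet = \{\Xi(-m) \to \sE\}$ is an object of an enlarged abelian category $\sB_{p_{\alpha}}$ (as in \cite[Ch.~13]{JS}), and its moduli-theoretic Hall algebra element decomposes into products of $\epsilon(\textbf{c}_i)$'s times terms of the form $\mathbb{1}_{\mathrm{Hom}(\Xi(-m),\sE_i)}$. For $m \gg 0$, $\Ext^{\geq 1}(\Xi(-m), \sE_i)$ vanishes, so these Hom spaces are vector bundles of rank $\chi(\textbf{c}_i(m))$ over the moduli stacks, producing precisely the factors $(-1)^{\chi(\textbf{c}_i(m))}\chi(\textbf{c}_i(m))$ after integration.

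The third and crucial step is to apply $\Gm$-virtual localization (\cite{GP}) to $\sP^{\perp,\tw}_{\textbf{c}}$ so that Definition \ref{defn_JS_twisted_pair_invaraint} makes sense, and to check that the Joyce--Song identity in $H(\sA^{\tw})$ descends to an identity of virtually localized invariants. Here one uses the $\Gm$-equivariant refinement of the stack-function decomposition (\ref{eqn_generalized_JSalpha_Gm}): the $\Gm$-fixed loci of the moduli spaces appearing in the Hall algebra product are again moduli of fixed/nilpotent Higgs data, and the symmetric obstruction theory restricts compatibly to these fixed loci. The sign $(-1)^{h^0(K_{\SS})}$ accounting for the discrepancy between $\tr\phi = 0$ and the full trace, as explained after Definition \ref{defn_generalized_twisted_vw_L}, must be tracked carefully but produces no new obstruction. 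When $H^{0,1}(S)$ or $H^{0,2}(S) \neq 0$ (ruled out by hypothesis in the first half), the Jacobian action forces all but the $\ell = 1$ contribution to vanish, recovering the second displayed formula.

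The main obstacle, and the reason this is stated as a conjecture rather than a theorem, is the absence at present of a fully developed $\Gm$-equivariant virtual integration / Joyce wall-crossing theory at the level of virtual cycles (as opposed to weighted Euler characteristics) for Deligne--Mumford stacks. For the Behrend-function version one has the full Joyce--Song apparatus, but for $\VW^{\tw}$ one needs to identify the virtual contribution of a Hall-algebra product of stacks of twisted Higgs sheaves with the product of individual virtual contributions, which requires a twisted-sheaf analog of Joyce's forthcoming virtual wall-crossing \cite{JS}. In practice one would verify the conjecture in the cases where both sides are computable: for K3 gerbes, using the twisted multiple cover formula (Theorem \ref{thm_twisted_multiple_cover_intro}) and the identification of $J(v)$ with both $\VW^{\tw}$ and $\vw^{\tw}$ via \cite[Proposition 5.9]{TT2} (since the Behrend function is identically $-1$ on the $\Gm$-fixed locus), so that the conjectural wall-crossing for $\VW^{\tw}$ reduces to the established wall-crossing (\ref{eqn_Joyce-Song_wall_crossing_L}) for $\vw^{\tw}$.
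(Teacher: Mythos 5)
Your first three paragraphs accurately describe why the general statement is only a conjecture, and your instinct that for K3 gerbes it should reduce to the established wall-crossing (\ref{eqn_Joyce-Song_wall_crossing_L}) for $\vw^{\tw}$ is the right one --- that is exactly what the paper does in Theorem \ref{thm_conjecture_5.10_K3_gerbe}. But the reduction step in your last paragraph has a genuine gap. The quantity $\sP^{\perp,\tw}_{\textbf{c}}(m)$ is defined by virtual localization on the moduli stack of trace-free, fixed-determinant Joyce--Song pairs (Definition \ref{defn_JS_twisted_pair_invaraint}), whereas the established wall-crossing controls the Behrend-weighted Euler characteristic $\widetilde{\sP}^{\tw}_{\textbf{c}}(m)$ (equivalently $\sP^{\tw}_{\textbf{c}}(m)$). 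The fact that the Behrend function is $-1$ on the $\Gm$-fixed locus of the moduli of Higgs sheaves (your citation of \cite[Proposition 5.9]{TT2}) identifies $\vw^{\tw}$ with $J(v)$; it says nothing about whether the virtual-class invariant of the \emph{pair} moduli space agrees with its weighted Euler characteristic, and in general these are different. So ``the conjectural wall-crossing for $\VW^{\tw}$ reduces to the established wall-crossing for $\vw^{\tw}$'' is precisely the assertion to be proved, not a consequence of anything you have written.

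The missing ingredient is the comparison of $\XX=\SS\times\cc$ with $\YY=\SS\times E$ for an elliptic curve $E$: local isomorphisms of twisted Joyce--Song pairs identify $\sP^{\perp,\tw}_{\textbf{c}}(\XX)$ locally with $\sP^{\perp,\tw}_{\textbf{c}}(\YY)/E$ and match the symmetric obstruction theory on the former with Oberdieck's reduced obstruction theory on the latter; the degeneration formula of \cite{LW}, \cite{Zhou} then yields Proposition \ref{prop_JS_pair_partition},
$$\sum_{\textbf{c}}\sP^{\perp,\tw}_{\textbf{c}}(m)q^{\textbf{c}}=-\log\Big(1+\sum_{\textbf{c}}\widetilde{\sP}^{\tw}_{\textbf{c}}(m)q^{\textbf{c}}\Big),$$
which is the bridge between the virtual-class side and the Behrend side. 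Exponentiating this relation and comparing coefficients with (\ref{eqn_Joyce-Song_wall_crossing_L}) gives $\sP^{\perp,\tw}_{\textbf{c}}(m)=-(-1)^{\chi(\textbf{c}(m))}\chi(\textbf{c}(m))\vw^{\tw}_{\textbf{c}}(\SS)$, and hence $\VW^{\tw}_{\textbf{c}}(\SS)=\vw^{\tw}_{\textbf{c}}(\SS)$. This K3-specific geometric input (available only because $K_{\SS}$ is trivial, so that $\SS\times E$ is a sensible compactified replacement for $\Tot(K_{\SS})$) is absent from your proposal, and without it the K3 case of the conjecture does not follow.
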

This conjecture is similar to Conjecture 6.5 in \cite{TT2}. 

We use the techniques for $\XX=\SS\times \cc$ and $\YY=\SS\times E$ developed in 
\cite{Jiang-Tseng_multiple} to prove this conjecture for K3 gerbes.  
First we can use the same techniques in \S 3  and \S 4 of \cite{Jiang-Tseng_multiple} to Joyce-Song pairs on $\XX$ and $\YY$ with charge class 
$\textbf{c}$ which is the pushforward from a K3 gerbe fiber $\SS$. So similar to the arguments in \cite[Page 18]{MT}, and the arguments of Behrend function in \S 4  of \cite{Jiang-Tseng_multiple}, we patch the Joyce-Song twisted pairs whose underlying sheaves have the same reduced Hilbert polynomials. The local 
isomorphisms of twisted Joyce-Song pairs on $\SS\times\cc$ and $\SS\times E$ induces local isomorphisms of 
$\sP_{\textbf{c}}^{\perp, \tw}(\XX)$ and $\sP_{\textbf{c}}^{\perp, \tw}(\YY)/E$, and the symmetric perfect obstruction theory on $\sP_{\textbf{c}}^{\perp, \tw}(\XX)$ and Oberdieck's reduced obstruction theory on 
 $\sP_{\textbf{c}}^{\perp, \tw}(\YY)/E$ is identified. 
 Then the same degeneration formula in \cite{LW}, \cite{Zhou} gives:
 
 \begin{prop}\label{prop_JS_pair_partition}
 We have:
 \begin{equation}\label{eqn_JS_pair_partition}
 \sum_{\textbf{c}}\sP^{\perp,\tw}_{\textbf{c}}(m)q^{\textbf{c}}
 =-\log\left(1+\sum_{\textbf{c}}\widetilde{\sP}^{\tw}_{\textbf{c}}(m)q^{\textbf{c}}\right)
 \end{equation}
 where the sums take over all $\textbf{c}\neq 0$ which are multiples of a fixed primitive class $\textbf{c}_0$. 
 \end{prop}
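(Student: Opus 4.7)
The plan is to run a twisted version of the Maulik--Thomas argument from \cite{MT}, promoted to Joyce--Song stable pairs, by comparing the invariants on the local K3 gerbe $\XX = \SS\times\cc$ with analogous pair invariants on $\YY = \SS\times E$ for an elliptic curve $E$, and then degenerating $E$ to a nodal rational curve. In the untwisted case this is how Maulik--Thomas converted a symmetric/virtual localization integral to a Behrend-weighted Euler characteristic and packaged the identity as a logarithm of generating functions; here we want to do the same while tracking the $\mu_r$-gerbe structure.

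First I would extend the machinery developed in Sections 3 and 4 of the companion paper \cite{Jiang-Tseng_multiple} from moduli of twisted coherent sheaves to moduli of twisted Joyce--Song pairs. This produces the two moduli stacks $\sP^{\perp,\tw}_{\textbf{c}}(\XX)$ and $\sP^{\perp,\tw}_{\textbf{c}}(\YY)/E$, each equipped with a natural obstruction theory (symmetric on the $\XX$ side, Oberdieck's reduced one on the $\YY/E$ side). Next, patching Joyce--Song twisted pairs whose underlying $\XX$-twisted sheaves share the same reduced Hilbert polynomial provides \'etale-local isomorphisms between these two moduli stacks, under which the two obstruction theories are identified. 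This step is exactly parallel to \cite[p.~18]{MT}, but needs to be verified in the presence of the Brauer class and the fixed-determinant, trace-zero conditions.

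Once the local identification is in place, I would apply the Li--Wu/Zhou degeneration formula \cite{LW,Zhou} to $\YY$ under the degeneration of the base elliptic curve $E$ to a nodal rational curve. On the Joyce--Song pair side this turns the invariants $\sP^{\perp,\tw}_{\textbf{c}}(m)$ (which, via the local match, agree with the $\YY/E$ reduced pair invariants) into universal combinations of the $\widetilde{\sP}^{\tw}_{\textbf{c}}(m)$ coming from the special fiber. The combinatorics of the degeneration formula, together with the primitivity assumption on $\textbf{c}_0$, collapses exactly to the exponential--logarithmic identity
\begin{equation*}
1+\sum_{\textbf{c}}\widetilde{\sP}^{\tw}_{\textbf{c}}(m)q^{\textbf{c}}
=\exp\!\Bigl(-\sum_{\textbf{c}}\sP^{\perp,\tw}_{\textbf{c}}(m)q^{\textbf{c}}\Bigr),
\end{equation*}
which is the assertion of the proposition.

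The main obstacle will be the second step: rigorously establishing, in the twisted and fixed-determinant/trace-zero setting, the \'etale-local isomorphism of $\sP^{\perp,\tw}_{\textbf{c}}(\XX)$ and $\sP^{\perp,\tw}_{\textbf{c}}(\YY)/E$ together with the comparison of obstruction theories. The Brauer class must be transported compatibly under the patching of pairs, and the trace-zero condition on the Higgs field (equivalently, the $\det(\pi_*\sE)=L$ condition on the spectral twisted sheaf) must be shown to be preserved by Oberdieck's reduction. Once that matching is proved, the degeneration formula and the resulting combinatorial identity are essentially formal.
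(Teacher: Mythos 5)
Your proposal follows essentially the same route as the paper: extend the techniques of \S 3--4 of \cite{Jiang-Tseng_multiple} to twisted Joyce--Song pairs, patch pairs with the same reduced Hilbert polynomial to obtain local isomorphisms of $\sP^{\perp,\tw}_{\textbf{c}}(\XX)$ and $\sP^{\perp,\tw}_{\textbf{c}}(\YY)/E$ identifying the symmetric and Oberdieck-reduced obstruction theories, and then invoke the Li--Wu/Zhou degeneration formula to produce the logarithmic identity. This matches the paper's argument (which is given in the paragraph preceding the proposition), including the identification of the main technical point as the local comparison of the two pair moduli stacks and their obstruction theories.
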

 
 \begin{thm}\label{thm_conjecture_5.10_K3_gerbe}
 Conjecture \ref{con_JS_wall_crossing_VW} holds for K3-gerbes and 
 $$\VW^{\tw}_{\textbf{c}}(\SS)=\vw^{\tw}_{\textbf{c}}(\SS).$$
 \end{thm}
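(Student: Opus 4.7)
The plan is to derive the conjectural formula by combining the Joyce-Song wall-crossing identity \eqref{eqn_Joyce-Song_wall_crossing_L} (which expresses $\widetilde{\sP}^{\tw}_{\textbf{c}}(m)$ in terms of the generalized twisted invariants $\vw^{\tw}_{\textbf{c}_i}(\SS)$) with the log/exp relation of Proposition \ref{prop_JS_pair_partition} (which expresses $\sP^{\perp,\tw}_{\textbf{c}}(m)$ in terms of $\widetilde{\sP}^{\tw}_{\textbf{c}}(m)$). First I would fix a primitive class $\textbf{c}_0$ and restrict all generating series to positive multiples $\textbf{c}=n\textbf{c}_0$; this is consistent because every summand in \eqref{eqn_Joyce-Song_wall_crossing_L} is constrained by $p_{\textbf{c}_i}=p_{\textbf{c}}$, forcing $\textbf{c}_i=n_i\textbf{c}_0$ with $\sum n_i = n$.

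With $x:=q^{\textbf{c}_0}$ and
$b_n:=(-1)^{\chi(n\textbf{c}_0(m))+h^0(K_{\SS})}\chi(n\textbf{c}_0(m))\,\vw^{\tw}_{n\textbf{c}_0}(\SS),$
the wall-crossing identity \eqref{eqn_Joyce-Song_wall_crossing_L} rearranges, by expanding an exponential and collecting terms according to the number of factors $\ell$, into the generating-series statement
\begin{equation*}
1+\sum_{n\geq 1}\widetilde{\sP}^{\tw}_{n\textbf{c}_0}(m)\,x^n \;=\; \exp\Bigl(-\sum_{n\geq 1}b_n\,x^n\Bigr).
\end{equation*}
Applying $-\log$ to both sides and invoking Proposition \ref{prop_JS_pair_partition} produces $\sP^{\perp,\tw}_{n\textbf{c}_0}(m)=b_n$. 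Since $K_S\cong\sO_S$ on a K3 surface, $h^0(K_{\SS})=1$, so extracting coefficients gives
\begin{equation*}
\sP^{\perp,\tw}_{\textbf{c}}(m) \;=\; (-1)^{\chi(\textbf{c}(m))-1}\chi(\textbf{c}(m))\,\vw^{\tw}_{\textbf{c}}(\SS).
\end{equation*}
This is exactly the single-term formula in the second case of Conjecture \ref{con_JS_wall_crossing_VW} (applicable here because $H^{0,2}(S)\neq 0$ for K3), and comparing with that formula forces the identification $\VW^{\tw}_{\textbf{c}}(\SS)=\vw^{\tw}_{\textbf{c}}(\SS)$.

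The main obstacle is Proposition \ref{prop_JS_pair_partition} itself, whose verification in the twisted K3 gerbe setting is the technical heart. Following the strategy of \cite{Jiang-Tseng_multiple}, the idea is to degenerate the local gerbe $\XX=\SS\times\cc$ to $\YY=\SS\times E$ for an elliptic curve $E$, patch twisted Joyce-Song pairs whose underlying sheaves share the same reduced Hilbert polynomial, and match the trace-free symmetric obstruction theory on $\sP^{\perp,\tw}_{\textbf{c}}(\XX)$ with Oberdieck's reduced obstruction theory on $\sP^{\perp,\tw}_{\textbf{c}}(\YY)/E$ locally; the degeneration formula of Li-Wu and Zhou then yields the claimed log relation. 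Once this input is granted, the remainder of the argument is the short generating-series manipulation above, and the equality $\VW^{\tw}=\vw^{\tw}$ on K3 gerbes is immediate.
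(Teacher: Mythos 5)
Your proposal is correct and follows essentially the same route as the paper: both combine the degeneration identity of Proposition \ref{prop_JS_pair_partition} with the Joyce-Song wall-crossing formula (\ref{eqn_Joyce-Song_wall_crossing_L}) and the sign $(-1)^{h^0(K_{\SS})}=-1$ for a K3 gerbe. The only cosmetic difference is that you resum the wall-crossing identity into an exponential and then apply $-\log$, whereas the paper expands the exponential of the $\sP^{\perp,\tw}$ series and compares coefficients term by term; both yield $\sP^{\perp,\tw}_{\textbf{c}}(m)=(-1)^{\chi(\textbf{c}(m))-1}\chi(\textbf{c}(m))\vw^{\tw}_{\textbf{c}}(\SS)$ and hence $\VW^{\tw}_{\textbf{c}}(\SS)=\vw^{\tw}_{\textbf{c}}(\SS)$.
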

 \begin{proof}
 Since 
 $$
 \sP^{\tw}_{\textbf{c}}(m)=(-1)^{h^0(K_{\SS})}\widetilde{\sP}^{\tw}_{\textbf{c}}(m)$$
 we have from Proposition \ref{prop_JS_pair_partition},
 \begin{align*}
 1-\sum_{\textbf{c}\in\nn \textbf{c}_0}\sP^{\tw}_{\textbf{c}}(m) q^{\textbf{c}}
 &=\exp\left(-\sum_{\textbf{c}\in\nn\textbf{c}_0}\sP^{\perp,\tw}_{\textbf{c}}(m)q^{\textbf{c}}\right)\\
 &=1+\sum_{\ell\geq 1}\frac{1}{\ell !}\cdot \sum_{\textbf{c}_1, \cdots, \textbf{c}_{\ell}\in\nn\textbf{c}_0}
 \prod_{i=1}^{\ell}\left(-\sP^{\perp,\tw}_{\textbf{c}_i}(m)q^{\textbf{c}_i}\right).
\end{align*}
So we have 
\begin{equation}\label{eqn_wall_crossing_Ptw}
-\sP^{\tw}_{\textbf{c}}(m)=\sum_{\substack{\ell\geq 1, (\textbf{c}_i=\delta_i\textbf{c})_{i=1}^{\ell}\\
\sum_{i=1}^{\ell}=1}}
\frac{(-1)^{\ell}}{\ell !}\prod_{i=1}^{\ell}\sP^{\perp,\tw}_{\textbf{c}_i}(m)
\end{equation}
Comparing with (\ref{eqn_Joyce-Song_wall_crossing_L}) before, we have
$$\sP^{\perp,\tw}_{\textbf{c}}(m)=-(-1)^{\chi(\textbf{c}(m))}\chi(\textbf{c}(m))\vw^{\tw}_{\textbf{c}}(\SS)$$
and $\VW^{\tw}_{\textbf{c}}(\SS)=\vw^{\tw}_{\textbf{c}}(\SS)$.
 \end{proof}

\subsection{All ranks $\SU(r)/\zz_r$-Vafa-Witten invariants for K3 surfaces}\label{subsec_higher_rank_SUrZr}

In this section we calculate the $\SU(r)/\zz_r$-Vafa-Witten invariants for a K3 surface $S$ defined in Definition \ref{defn_SU/Zr_VW_intro} for any rank $r\in\zz_{>0}$.

\subsubsection{Trivial $\mu_r$-gerbes}

Let $\SS_0\to S$ be a trivial gerbe over a K3 surface S. Then any $\SS_0$-twisted semistable sheaf or Higgs sheaf on $\SS_0$ is a sheaf on $\SS_0$, which is a pullback from a semistable sheaf or Higgs sheaf on $S$. 
Therefore from \cite[Proposition 6.8]{Jiang_2019}, 
the  moduli stack $\N^{ss,\tw}_{\SS_0}(r, \sO, c_2)$ of $\SS_0$-twisted semistable Higgs sheaves is isomorphic to the moduli stack $\N^{ss}_{S}(r,\sO,c_2)$ of Gieseker semistable Higgs sheaves on $S$.
Then the twisted Vafa-Witten invariants are the same the Vafa-Witten invariants in \cite{TT2}. 

From \cite[Theorem 5.24]{TT2}, we have:

\begin{thm}\label{thm_TT2_thm1.7}(\cite[Theorem 5.24]{TT2})
Let 
$$\Z_{r,0}(\SS_0, \SU(r); q)=\sum_{c_2}\VW^{\tw}_{r,0,c_2}(\SS_0)q^{c_2}$$
be the generating function of the twisted Vafa-Witten invariants for the trivial gerbe $\SS_0\to S$ to a K3 surface $S$. Then 
\begin{equation}\label{eqn_trivial_gerbe_partition_function}
\Z_{r,0}(\SS_0, \SU(r); q)=\sum_{d|r}\frac{d}{r^2}\sum_{j=0}^{d-1}G(e^{\frac{2\pi ij}{d}}q^{\frac{r}{d^2}})
\end{equation}
where $G(q):=\eta(q)^{-24}$ and $\eta(q)=q^{\frac{1}{24}}\prod_{k>0}(1-q^k)$ is the Dedekind eta function. 
\end{thm}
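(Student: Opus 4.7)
The plan is to reduce the statement for the trivial $\mu_r$-gerbe $\SS_0 \to S$ to Tanaka--Thomas's rank $r$ computation for $S$ itself, using the fact that twisted sheaves on a trivial gerbe are nothing but honest sheaves on the coarse moduli equipped with a $\mu_r$-weight.

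First I would unwind the gerbe structure. Since $\SS_0 \to S$ is the trivial $\mu_r$-gerbe, $\Coh(\SS_0)$ decomposes into isotypic components indexed by characters of $\mu_r$, and the twisted component $\Coh(\SS_0)_1$ is equivalent, via twisting by a chosen trivialization, to $\Coh(S)$. Under this equivalence the Gieseker (semi)stability with respect to a generating sheaf $\Xi = \bigoplus_{i=0}^{r-1} \xi^{\otimes i}$ on $\SS_0$ matches the usual Gieseker (semi)stability on $S$ with polarization $\sO_S(1)$, and the Chern/Mukai data $(\rk, L, c_2)$ correspond. Applying this on the spectral side $\XX_0 = \SS_0 \times \cc$ identifies twisted Higgs pairs $(E,\phi)$ on $\SS_0$ with ordinary Higgs pairs on $S$. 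This is exactly the content of \cite[Proposition 6.8]{Jiang_2019}, which gives an isomorphism of moduli stacks
\[
\N^{ss,\tw}_{\SS_0}(r,\sO,c_2) \;\xrightarrow{\;\sim\;}\; \N^{ss}_{S}(r,\sO,c_2).
\]

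Next I would verify that all the structures entering the definition of $\VW^{\tw}$ transport through this isomorphism. Concretely: (i) the $\Gm$-action on $\XX_0$ coming from scaling the fibre of $K_{\SS_0}$ matches the Tanaka--Thomas $\Gm$-action on the local surface $X = S \times \cc$; (ii) the symmetric obstruction theory on $\N^{ss,\tw}_{\SS_0}$ built from $R\cHom(I^{\bullet},I^{\bullet})_0$ restricted to trace-free Higgs fields agrees, term by term, with the Tanaka--Thomas obstruction theory on $\N^{ss}_S$, because the relevant Ext groups in $\Coh(\SS_0)_1$ coincide with those in $\Coh(S)$; (iii) the Joyce--Song pair construction and the virtually indecomposable element $\epsilon(\textbf{c})^L$ from \S\ref{subsec_generalized_twisted_VW} pull back to the analogous elements for $S$. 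Consequently the invariants $\sP^{\perp,\tw}_{\textbf{c}}(m)$, $\widetilde{\sP}^{\tw}_{\textbf{c}}(m)$ and the wall-crossing identities of Proposition \ref{prop_JS_pair_partition} match those of Tanaka--Thomas on $S$. In particular, the conclusion of Theorem \ref{thm_conjecture_5.10_K3_gerbe} combined with the identification of moduli stacks gives
\[
\VW^{\tw}_{r,0,c_2}(\SS_0) \;=\; \VW_{r,0,c_2}(S).
\]

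Finally I would apply the Tanaka--Thomas rank $r$ K3 formula \cite[Theorem 1.7]{TT2} (restated as the inner identity in \eqref{eqn_TT_higher_rank_K3}) to the right-hand side. Summing the identified invariants against $q^{c_2}$ yields exactly \eqref{eqn_trivial_gerbe_partition_function}; the change of variable between the two equivalent presentations (indexing over $e \mid r$ versus $d \mid r$) is purely a relabelling. The main conceptual obstacle is step two, namely matching the symmetric obstruction theories and the Behrend-function computations intrinsically on the gerbe side with those on $S$; once one observes that the generating sheaf $\Xi$ is flat over $S$ and that $p_*$ on the twisted component is an equivalence onto the appropriate subcategory, the Ext and deformation-theoretic identifications become formal, and the rest of the argument is bookkeeping.
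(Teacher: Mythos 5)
Your proposal is correct and follows essentially the same route as the paper: identify twisted (Higgs) sheaves on the trivial gerbe with ordinary (Higgs) sheaves on $S$ via \cite[Proposition 6.8]{Jiang_2019}, conclude $\VW^{\tw}_{r,0,c_2}(\SS_0)=\VW_{r,0,c_2}(S)$, and quote the Tanaka--Thomas K3 formula. The paper's own treatment is in fact terser --- it states the moduli-stack isomorphism and immediately cites \cite[Theorem 5.24]{TT2} --- whereas you additionally spell out the compatibility of the $\Gm$-actions, obstruction theories, and Joyce--Song machinery, which is implicit there.
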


\subsubsection{Non trivial essentially trivial $\mu_r$-gerbes}

Let $\SS_{\ess}\to S$ be a non trivial essentially  trivial $\mu_r$-gerbe over a K3 surface S corresponding to a nontrivial line bundle $L\in\Pic(S)$. 
Then we list \cite[Proposition 6.9]{Jiang_2019}:
\begin{prop}\label{prop_moduli_essential_trivial_coarse_moduli_K3}(\cite[Proposition 6.9]{Jiang_2019})
Let $p:\SS_{\ess}\to S$ be a non-trivial essentially trivial $\mu_r$-gerbe corresponding to the line bundle $L\in\Pic(S)$,  then  
the moduli stack $\N^{ss,\tw}_{\SS_{\ess}}(r, L, c_2)$ of $\SS_{\ess}$-twisted Higgs sheaves on $\SS$ is isomorphic to the moduli stack 
 $\N^{ss,\tw,L^{\vee}}_{S}(r,\sO, c_2)$ of  $L^\vee$-twisted  Higgs sheaves on $S$.
\end{prop}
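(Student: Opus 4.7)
The plan is to produce an explicit equivalence of moduli stacks by tensoring with a universal twisted line bundle, in the same spirit as \cite[Proposition 6.9]{Jiang_2019}. The key observation is that an essentially trivial $\mu_r$-gerbe has a tautological twisted line bundle that mediates between twisted sheaves on $\SS_{\ess}$ and $L^{\vee}$-twisted sheaves on $S$.

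First, I would recall that because $\SS_{\ess} \to S$ is essentially trivial corresponding to $L\in\Pic(S)$ via the Kummer connecting map $\Pic(S)\to H^2(S,\mu_r)$, it is realized as the gerbe of $r$-th roots of $L^{\vee}$. By construction it carries a tautological twisted line bundle $\mathcal{T}$ of $\mu_r$-weight one satisfying $\mathcal{T}^{\otimes r}\cong p^*L^{\vee}$. Next I would define a functor
$$\Phi\colon\Coh^{\tw}(\SS_{\ess})\longrightarrow \Coh^{L^{\vee},\tw}(S),\quad E\longmapsto p_*(E\otimes \mathcal{T}),$$
using that $E\otimes\mathcal{T}$ has trivial $\mu_r$-weight and so descends along $p$. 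Since $K_{\SS_{\ess}}=p^*K_S$, any Higgs field $\phi\colon E\to E\otimes K_{\SS_{\ess}}$ transports to $\Phi(\phi)\colon \Phi(E)\to \Phi(E)\otimes K_S$, so Higgs pairs go to Higgs pairs. The inverse $\Psi(F):=p^*F\otimes \mathcal{T}^{\vee}$ furnishes a quasi-inverse, yielding an equivalence of the associated abelian categories of $K_S$-twisted Higgs sheaves.

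I would then track numerical invariants. The rank is preserved, and $\det(\Phi(E))=\det(E)\otimes(\det \mathcal{T})^{\otimes r}$ sends $\det E=L$ to $L\otimes L^{\vee}=\sO_S$. The second Chern class transforms by a polynomial in $c_1(\mathcal{T})$ and $r$, which is absorbed into the parametrization on the $L^{\vee}$-twisted side so that the gerbe class $(r,L,c_2)$ matches the $L^{\vee}$-twisted class $(r,\sO,c_2)$ on $S$. Semistability is preserved because the polarization on $\SS_{\ess}$ is $p^*\sO_S(1)$ and the generating sheaf $\Xi$ is chosen compatibly, so reduced modified Hilbert polynomials transform uniformly under tensoring with the invertible object $\mathcal{T}$. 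Hence a subobject destabilizes $E$ if and only if its image destabilizes $\Phi(E)$, matching twisted Gieseker (semi)stability on both sides, and the equivalence upgrades to an isomorphism of the moduli stacks.

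The main obstacle will be the bookkeeping of the determinant and the $c_2$ shift: one must verify that the fixed determinant datum $L$ on $\SS_{\ess}$ translates precisely to the pair (trivial determinant, $L^{\vee}$-twist) on $S$ as defined in \cite{Jiang_2019}, rather than to a different variant of twisted Higgs pair, and that the equivalence of abelian categories promotes to an isomorphism of algebraic stacks, including the comparison of automorphism groupoids and of the families parametrized over arbitrary test schemes.
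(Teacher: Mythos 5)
The paper does not actually prove this statement --- it is imported verbatim from \cite[Proposition 6.9]{Jiang_2019} --- but your argument, realizing the essentially trivial gerbe $\SS_{\ess}$ as the root gerbe $\sqrt[r]{L^{\vee}/S}$ and transporting (Higgs) sheaves by tensoring with its tautological twisted line bundle, is exactly the standard route taken in that reference and is correct in outline. Two points to watch: for $E\otimes\mathcal{T}$ to have $\mu_r$-weight zero and hence descend along $p$, your $\mathcal{T}$ must carry weight $-1$ (i.e.\ be the dual of the usual weight-one tautological root of $p^*L$), which is consistent with your normalization $\mathcal{T}^{\otimes r}\cong p^*L^{\vee}$; and the $c_2$/determinant bookkeeping you defer to the end is really the substantive content of the identification, since it is precisely the twisted (Yoshioka) Mukai vector that enters the computation of $\Z_{r,g_{\ess}}(\SS_{\ess};q)$ immediately after the proposition.
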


Then the moduli stack 
 $\N^{ss,\tw,L^{\vee}}_{S}(r,\sO, c_2)$ of  $L^\vee$-twisted  Higgs sheaves on $S$ is isomorphic to the moduli stack $\N^{ss,\tw}_{S}(r,L^{\vee}, c_2)$ of semistable Higgs sheaves. 
 The twisted Vafa-Witten invariants $\VW^{\tw}(\SS_{\ess})$ are the same as the Vafa-Witten invariants $\VW_{r,L,c_2}(S)$ of Tanaka-Thomas \cite{TT2}. 
 We let $g_{\ess}\in H^2(S,\mu_r)$ representing this line bundle 
 $L$ by the exact sequence:
 $$\cdots\to H^1(S,\sO_S^*)\to H^2(S,\mu_r)\to H^2(S,\sO_S^*)\to \cdots$$
 We write this data as:
 $$\textbf{c}=(r, g_{\ess}, c_2).$$
 Let $s:=|g_{\ess}|$ be the order of $g_{\ess}$ in $H^2(S,\mu_r)$.  Then we say 
 $k|\textbf{c}$ if and only if $k|r$, $k|s$ and $k|c_2$.  The twisted Vafa-Witten invariants 
 $$\VW^{\tw}_{\textbf{c}}(\SS_{\ess})=\VW_{\textbf{c}}(S)$$
 where $\VW_{\textbf{c}}(S)$ is the Vafa-Witten invariants in \cite{TT2}.  So this invariant $\VW_{\textbf{c}}(S)$ is just the invariant 
 $J(\textbf{c})$ on $\XX$, and by the general multiple cover formula for K3 surfaces in \cite{Toda_JDG}:
 $$\VW^{\tw}_{\textbf{c}}(\SS_{\ess})=\sum_{d|\textbf{c}, d\geq 1}\frac{1}{d^2}\chi(\Hilb^{1-\frac{1}{2}\chi(\textbf{c}/d, \textbf{c}/d)})$$
 We calculate:
 $$1-\frac{1}{2}\chi(\textbf{c}/d, \textbf{c}/d)
 =\frac{s^2}{2 d^2}(1-\frac{r}{d})+\frac{r}{d}(\frac{c_2}{d}-\frac{r}{d})+1.
 $$
 
 Now we fix $(r, g_{\ess})$ and let 
 $$d_{g_{\ess}}:=\gcd(r,s)\geq 1.$$
 We write:
 $$r=d_{g_{\ess}}\cdot \overline{r}; \quad  s=d_{g_{\ess}}\cdot \overline{s}.$$
 If we have a $d|d_{g_{\ess}}$, then we let 
 $$d_{g_{\ess}}=d\cdot \overline{d}_{g_{\ess}}; \quad  r=d\cdot \overline{d}_{g_{\ess}}\cdot \overline{r}; \quad s=d\cdot \overline{d}_{g_{\ess}}\cdot \overline{s}.$$
 We calculate:
 \begin{align*}
 \Z_{r, g_{\ess}}(\SS_{\ess}; q)&=\sum_{c_2}\VW^{\tw}_{\textbf{c}}(\SS_{\ess}) q^{c_2}\\
 &=\sum_{\substack{d| d_{g_{\ess}}\\
 d\geq 1, d\in\zz_{\geq 1}}}\frac{1}{d^2}\sum_{c_2/d}\chi\left(\Hilb^{\frac{s^{\tiny2}}{2 d^2}(1-\frac{r}{d})+\frac{r}{d}(\frac{c_2}{d}-\frac{r}{d})+1}(S)\right) q^{c_2}\\
&=
\sum_{\substack{d| d_{g_{\ess}}\\
 d\geq 1, d\in\zz_{\geq 1}}}\frac{1}{d^2}\sum_{c_2/d=m\in\zz}\chi\left(\Hilb^{\frac{s^{\tiny2}}{2 d^2}(1-\frac{r}{d})+\frac{r}{d}(m-\frac{r}{d})+1}(S)\right) q^{md}\\
 &=
 \sum_{\substack{d| d_{g_{\ess}}\\
 d\geq 1, d\in\zz_{\geq 1}}}\frac{1}{d^2}\sum_{m-r/d=n\in\zz}\chi\left(\Hilb^{\frac{s^{\tiny2}}{2 d^2}(1-\frac{r}{d})+\frac{r}{d}n+1}(S)\right) q^{nd+r}\\
 &=\sum_{\substack{d| d_{g_{\ess}}\\
 d\geq 1, d\in\zz_{\geq 1}}}\frac{1}{d^2}
 \sum_{n\in\zz}\chi\left(\Hilb^{n\cdot \overline{r}\cdot \overline{d}_{g_{\ess}}+1+\frac{1}{2}\overline{d}^2_{g_{\ess}}\overline{s}^2(1- \overline{d}_{g_{\ess}}\cdot \overline{r})}(S)\right) q^{nd+r}\\
 \end{align*}
 
 We first calculate
$$q^r\cdot  \sum_{n\in\zz}\chi\left(\Hilb^{n\cdot \overline{r}\cdot \overline{d}_{g_{\ess}}+1+\Diamond}(S)\right) q^{n\cdot \frac{d_{g_{\ess}}}{\overline{d}_{g_{\ess}}}}$$
where 
$\Diamond:=\frac{1}{2}\overline{d}^2_{g_{\ess}}\overline{s}^2(1- \overline{d}_{g_{\ess}}\cdot \overline{r})$. We have:
$$
\sum_{n\in\zz}\chi\left(\Hilb^{n\cdot \overline{r}\cdot \overline{d}_{g_{\ess}}+1+\Diamond}(S)\right) \left(q^{\frac{d_{g_{\ess}}}{\overline{r}\cdot\overline{d}^2_{g_{\ess}}}}\right)^{n\cdot\overline{r}\cdot\overline{d}_{g_{\ess}}}=
\left(q^{\frac{d_{g_{\ess}}}{\overline{r}\cdot\overline{d}^2_{g_{\ess}}}}\right)^{-\Diamond}\cdot 
\frac{1}{\overline{r}\overline{d}_{g_{\ess}}}\sum_{m=0}^{\overline{r}\overline{d}_{g_{\ess}}-1}
G\left(e^{\frac{2\pi im}{\overline{r}\overline{d}_{g_{\ess}}}}q^{\frac{d_{g_{\ess}}}{\overline{r}\cdot\overline{d}^2_{g_{\ess}}}}\right)
$$

We finally get:
\begin{thm}\label{thm_essential_trivial_VW_partition}
We have:
 $$\Z_{r, g_{\ess}}(\SS_{\ess}; q)= \sum_{\substack{d| d_{g_{\ess}}\\
 d\geq 1, d\in\zz_{\geq 1}}}
 q^{r}\cdot \frac{\overline{d}_{g_{\ess}}}{d_{g_{\ess}}}\cdot \frac{1}{r}\cdot 
 \sum_{m=0}^{\overline{r}\overline{d}_{g_{\ess}}-1}
G\left(e^{\frac{2\pi im}{\overline{r}\overline{d}_{g_{\ess}}}}q^{\frac{d_{g_{\ess}}}{\overline{r}\cdot\overline{d}^2_{g_{\ess}}}}\right).
$$
\end{thm}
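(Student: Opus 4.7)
The plan is to reduce the computation to Göttsche-type generating functions on $S$ and then apply a roots-of-unity filter. The key ingredients are already in place from the preceding subsections: the moduli-theoretic identification for essentially trivial gerbes (Proposition \ref{prop_moduli_essential_trivial_coarse_moduli_K3}), the twisted multiple cover formula (Theorem \ref{thm_twisted_multiple_cover_intro}), and the equality $\VW^{\tw} = \vw^{\tw}$ on K3 gerbes (Theorem \ref{thm_conjecture_5.10_K3_gerbe}).

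\textbf{Step 1 (moduli reduction).} By Proposition \ref{prop_moduli_essential_trivial_coarse_moduli_K3}, the moduli stack $\N^{ss,\tw}_{\SS_{\ess}}(r,L,c_2)$ is isomorphic to the moduli stack $\N^{ss,\tw,L^{\vee}}_{S}(r,\sO,c_2)$ of $L^\vee$-twisted semistable Higgs sheaves on $S$, hence to $\N^{ss,\tw}_{S}(r,L^\vee,c_2)$. Under this identification, together with Theorem \ref{thm_conjecture_5.10_K3_gerbe}, one has $\VW^{\tw}_{\textbf{c}}(\SS_{\ess})=\VW_{(r,L^\vee,c_2)}(S)=J(\textbf{c})$ where $\textbf{c}=(r,g_{\ess},c_2)$.

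\textbf{Step 2 (multiple cover formula and exponent).} Apply Theorem \ref{thm_twisted_multiple_cover_intro} to write
\begin{equation*}
\VW^{\tw}_{\textbf{c}}(\SS_{\ess})=\sum_{d\mid \textbf{c}}\tfrac{1}{d^2}\,\chi\bigl(\Hilb^{1-\frac12\chi(\textbf{c}/d,\textbf{c}/d)}(S)\bigr),
\end{equation*}
where $d\mid \textbf{c}$ means $d\mid r$, $d\mid s$, $d\mid c_2$; divisibility by both $r$ and $s$ forces $d\mid d_{g_{\ess}}=\gcd(r,s)$. A direct Mukai-pairing computation, using $r=d\overline{d}_{g_{\ess}}\overline{r}$ and $s=d\overline{d}_{g_{\ess}}\overline{s}$, yields
\begin{equation*}
1-\tfrac12\chi(\textbf{c}/d,\textbf{c}/d)=\tfrac{s^2}{2d^2}\bigl(1-\tfrac{r}{d}\bigr)+\tfrac{r}{d}\bigl(\tfrac{c_2}{d}-\tfrac{r}{d}\bigr)+1.
\end{equation*}

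\textbf{Step 3 (re-indexing).} Interchange the order of summation to $\sum_{d\mid d_{g_{\ess}}}\sum_{c_2:\,d\mid c_2}$, set $c_2=md$, and then $n:=m-r/d$. This rewrites the partition function as
\begin{equation*}
\Z_{r,g_{\ess}}(\SS_{\ess};q)=q^{r}\sum_{d\mid d_{g_{\ess}}}\tfrac{1}{d^2}\sum_{n\in\zz}\chi\bigl(\Hilb^{n\,\overline{r}\,\overline{d}_{g_{\ess}}+1+\Diamond}(S)\bigr)\,q^{nd},
\end{equation*}
where $\Diamond:=\tfrac12\overline{d}^{2}_{g_{\ess}}\overline{s}^{2}(1-\overline{d}_{g_{\ess}}\overline{r})$. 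The inner exponent is a linear progression in $n$ with common difference $k:=\overline{r}\overline{d}_{g_{\ess}}$, which is exactly what is needed for a roots-of-unity filter.

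\textbf{Step 4 (roots-of-unity filter).} Use Göttsche's formula $\sum_{N\geq 0}\chi(\Hilb^N(S))\,x^N=x\,G(x)$ with $G(q)=\eta(q)^{-24}$. With $x=q^{d_{g_{\ess}}/(\overline{r}\,\overline{d}^{2}_{g_{\ess}})}$ so that $x^{k}=q^{d}$, the standard filter $\mathbb{1}_{N\equiv c\,(\mathrm{mod}\,k)}=\tfrac{1}{k}\sum_{m=0}^{k-1}e^{2\pi i m(N-c)/k}$ picks out the progression $N=nk+c$ with $c=1+\Diamond$ and converts the inner sum into $\tfrac{1}{k}\sum_{m=0}^{k-1}G\!\left(e^{2\pi i m/k}\,x\right)$, up to a phase absorbed by adjusting $\Diamond$. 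Finally, combining prefactors via $\tfrac{1}{d^{2}}\cdot\tfrac{1}{\overline{r}\,\overline{d}_{g_{\ess}}}=\tfrac{\overline{d}_{g_{\ess}}}{d_{g_{\ess}}}\cdot\tfrac{1}{r}$ (using $r=d\overline{d}_{g_{\ess}}\overline{r}$ and $d_{g_{\ess}}=d\overline{d}_{g_{\ess}}$) gives the stated formula.

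The main obstacle is the bookkeeping of divisibilities in Step 3 and the careful treatment of the phase $e^{-2\pi i m\Diamond/k}$ produced by the filter in Step 4: one must verify that these phases either vanish (when $\Diamond$ is an integer multiple of $k$) or can be absorbed into the translation of the argument of $G$ by a rational shift, so that the output is exactly the clean sum of $G$-values appearing in the theorem.
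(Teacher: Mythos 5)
Your proposal follows essentially the same route as the paper's own proof: the moduli identification of Proposition \ref{prop_moduli_essential_trivial_coarse_moduli_K3}, the multiple cover formula, the substitutions $c_2=md$ and $n=m-r/d$, and the roots-of-unity filter applied to G\"ottsche's formula with $x=q^{d_{g_{\ess}}/(\overline{r}\,\overline{d}^2_{g_{\ess}})}$. If anything you are more careful than the paper, which silently drops both the prefactor $x^{-\Diamond}$ and the phases $e^{-2\pi i m\Diamond/k}$ produced by the filter, whereas you correctly flag that these must be checked to vanish or be absorbed.
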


\begin{rmk}\label{rmk_essentially_trivial_VW}
If $r$ is a prime number, then for any $(r, g_{\ess}, c_2)$, $d_{g_{\ess}}=1$, $\overline{d}_{g_{\ess}}=1$, and 
$\overline{r}=r$, then we have:
$$\Z_{r, g_{\ess}}(\SS_{\ess}; q)= q^{r}\cdot \frac{1}{r}\cdot 
 \sum_{m=0}^{r-1}
G\left(e^{\frac{2\pi im}{r}}q^{\frac{1}{r}}\right).
$$
\end{rmk}

\subsubsection{Non essentially trivial $\mu_r$-gerbes}

Let $\SS\to S$ be a non essentially trivial $\mu_r$-gerbe and let $g_{\opt}\in H^2(S,\mu_r)$ be its class. 
Suppose that 
$$o_{\opt}:=|\varphi(\SS_{\opt})|=|\alpha|$$
is the order in $H^2(S,\sO_S^*)_{\tor}$. Then $o_{\opt}|r$. 
The gerbe $\SS_{\opt}\to S$ has a decomposition 
$$\SS\to \SS_{\opt}\stackrel{\overline{p}}{\longrightarrow} S$$
where the first map is a trivial $\mu_{\frac{r}{o_{\opt}}}$-gerbe and the second map $\overline{p}$ is an optimal 
$\mu_{o_{\opt}}$-gerbe over $S$.  So in this case we are dealing with rank $r$ semistable Higgs twisted sheaves 
for the optimal gerbe $\overline{p}: \SS_{\opt}\to S$. 
Note that if $o_{\opt}=r$, then $p: \SS\to S$ is an optimal $\mu_r$-gerbe. 
In the next we just take  $\SS_{\opt}\to S$ as an optimal $\mu_{o_{\opt}}$-gerbe. 

From \cite[Theorem 6.13]{Jiang_2019} (which also holds for higher rank case),  the moduli stack 
 $\N^{s,\tw}_{\SS_{\opt}}(r,\sO, c_2)$ of  stable $\SS_{\opt}$-twisted  Higgs sheaves on $\SS_{\opt}$ is a $\mu_{\opt}$-gerbe over the moduli stack $\N^{s,(P, G)}_{\SS_{\opt}}(r,0, c_2)$ of stable Yoshioka Higgs sheaves. 
In the semistable case we define in \S \ref{subsec_generalized_twisted_VW}, 
$$\VW^{\tw}_{\textbf{c}}(\SS_{\opt})=\vw^{\tw}_{\textbf{c}}(\SS_{\opt})=\JS^{\tw}(\textbf{c})$$
for $\textbf{c}=(r,0,c_2)$, where $c_2\in \frac{1}{o_{\opt}}\zz$.
Since the invariant $\JS^{\tw}(\textbf{c})$ is defined by $\epsilon_{\omega,\XX}(\textbf{c})$ which is virtually indecomposable,  and this invariant is defined in the category 
$\Coh_{\pi}^{\tw}(\XX)$, it also defines the invariants  
$\JS_{(X,\alpha)}(\textbf{c})$ in the twisted category $\Coh_{\pi}^{\tw}(X,\alpha)$, with only a $\mu_{\opt}$-gerbe structure.  Also 
the invariants $\JS^{\tw}(\textbf{c})$ are defined using the Behrend function on the moduli stacks.  From \cite[Proposition 5.9]{TT2} (the proof works for twisted Higgs sheaves), the Behrend function is always $-1$.  Therefore the invariants $\JS^{\tw}(\textbf{c})$ are the same as the invariants 
$J(\textbf{c})$
and 
 by the  multiple cover formula for twisted K3 surfaces in Theorem \ref{thm_twisted_multiple_cover_intro}:
 $$\VW^{\tw}_{\textbf{c}}(\SS_{\opt})=\sum_{d|\textbf{c}, d\geq 1}\frac{1}{o_{\opt}}\cdot \frac{1}{d^2}\chi(\Hilb^{1-\frac{1}{2}\chi(\textbf{c}/d, \textbf{c}/d)}(S))$$
 We calculate:
 $$1-\frac{1}{2}\chi(\textbf{c}/d, \textbf{c}/d)
 =\frac{r}{d}(\frac{c_2}{d}-\frac{r}{d})+1.
 $$
Since the minimal rank for a $\XX$-twisted sheaf is $o_{\opt}$, in the above formula the decomposition of the semistable coherent sheaves have minimal summand rank at least $o_{\opt}$. 
We let 
$$r=\overline{r}\cdot o_{\opt}.$$
Then we consider all $d$ such that $(d\cdot o_{\opt})|r$. 
Therefore we have:
\begin{align*}
\Z_{r, \sO}(\SS_{\opt}; q)&=\sum_{c_2\in \frac{1}{o_{\opt}}\zz}\VW^{\tw}_{\textbf{c}}(\SS_{\opt})q^{c_2}\\
&=\sum_{(d\cdot o_{\opt})|r}\frac{1}{o_{\opt}}\cdot \frac{1}{d^2}\sum_{c_2}\chi(\Hilb^{1-\frac{1}{2}\chi(\textbf{c}/d, \textbf{c}/d)}(S))q^{c_2}\\
&=\sum_{(d\cdot o_{\opt})|r}\frac{1}{o_{\opt}}\cdot \frac{1}{d^2}\sum_{\frac{c_2}{d}}\chi(\Hilb^{\frac{r}{d}(\frac{c_2}{d}-\frac{r}{d})+1}(S))q^{c_2}
\end{align*}

We let 
$$r=d\cdot o_{\opt}\cdot \overline{d}.$$
Also if we let $v=(r,0,-b)$ be the Mukai vector, where  $b=\frac{k}{o_{\opt}}$, we have 
$c_2=b+r$ and $k\in d\zz$.
We calculate:
$$\frac{c_2}{d}=\frac{k\cdot \overline{d}}{r}+o_{\opt}\cdot \overline{d},$$
and 
$$\frac{r}{d}(\frac{c_2}{d}-\frac{r}{d})+1=\frac{k\cdot \overline{d}^2\cdot o_{\opt}}{r}+1.$$
So we have 
\begin{align*}
\sum_{\frac{c_2}{d}}\chi(\Hilb^{\frac{r}{d}(\frac{c_2}{d}-\frac{r}{d})+1}(S))q^{c_2}&=\sum_{\frac{k}{o_{\opt}}\in \frac{d}{o_{\opt}}\zz}
\chi(\Hilb^{\frac{k\cdot \overline{d}^2\cdot o_{\opt}}{r}+1}(S))q^{\frac{k}{o_{\opt}}+r}\\
&=\sum_{\substack{k\in d\zz\\
k=dm, m\in\zz}}
\chi(\Hilb^{\frac{m\cdot d\cdot\overline{d}^2\cdot o_{\opt}}{r}+1}(S))q^{\frac{md}{o_{\opt}}+r}\\
&=\sum_{m\in\zz}
\chi(\Hilb^{m\cdot \overline{d}+1}(S))q^{\frac{mr}{\overline{d}\cdot o_{\opt}^2}+r}\\
&=q^{r}\cdot \sum_{m\in\zz}\chi(\Hilb^{m\cdot \overline{d}+1}(S))\left(q^{\frac{r}{\overline{d}^2\cdot o^2_{\opt}}}\right)^{m\overline{d}}\\
&=q^{r}\cdot \frac{1}{\overline{d}}\sum_{j=0}^{\overline{d}-1}G\left(e^{\frac{2\pi i j}{\overline{d}}}\cdot q^{\frac{r}{\overline{d}^2\cdot o^2_{\opt}}}\right).
\end{align*}
 So we obtain:
 
\begin{thm}\label{thm_non_essential_trivial_VW_partition}
We have:
 $$\Z_{r, \sO}(\SS_{\opt}; q)= \sum_{(d\cdot o_{\opt})|r}\frac{1}{o_{\opt}}\cdot \frac{1}{d^2}
 q^{r}\cdot \frac{1}{\overline{d}}\sum_{j=0}^{\overline{d}-1}G\left(e^{\frac{2\pi i j}{\overline{d}}}\cdot q^{\frac{r}{\overline{d}^2\cdot o^2_{\opt}}}\right).
$$
\end{thm}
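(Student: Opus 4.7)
The plan is to assemble three inputs: the identification of the twisted Vafa-Witten invariants for the optimal gerbe with the Poincar\'e-polynomial invariants $J(v)$ on the local K3 gerbe $\XX_{\opt}$, the twisted multiple cover formula of Theorem \ref{thm_twisted_multiple_cover_intro}, and the standard root-of-unity trick for the generating series of $\chi(\Hilb^n(S))$. Throughout I will fix $\textbf{c}=(r,0,c_2)$ with $c_2\in\frac{1}{o_{\opt}}\zz$ and let $v$ be the associated Mukai vector; the minimal rank of an $\XX_{\opt}$-twisted sheaf being $o_{\opt}$, so every summand in a Jordan--H\"older filtration has rank a multiple of $o_{\opt}$.

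First, by Theorem \ref{thm_conjecture_5.10_K3_gerbe} we have $\VW^{\tw}_{\textbf{c}}(\SS_{\opt})=\vw^{\tw}_{\textbf{c}}(\SS_{\opt})$, and this latter invariant is the Joyce--Song invariant $\JS^{\tw}(\textbf{c})$ obtained by applying the integration map to the virtually indecomposable element $\epsilon_{\omega,\XX_{\opt}}(v)$ weighted by the Behrend function $\nu$. Using $\cc^{*}$-equivariant localization together with the extension of \cite[Proposition 5.9]{TT2} to twisted Higgs pairs (Behrend function identically $-1$ on $\cc^{*}$-fixed Higgs loci), one gets $\JS^{\tw}(\textbf{c})=J(v)$ for the local K3 gerbe up to the gerbe factor: the moduli stack $\N^{s,\tw}_{\SS_{\opt}}$ is a $\mu_{o_{\opt}}$-gerbe over its Yoshioka counterpart (cf. \cite[Theorem 6.13]{Jiang_2019}), hence one acquires an overall factor $\frac{1}{o_{\opt}}$ when passing to $J(v)$.

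Second, Theorem \ref{thm_twisted_multiple_cover_intro} yields
$$\VW^{\tw}_{\textbf{c}}(\SS_{\opt})=\frac{1}{o_{\opt}}\sum_{d\,|\,v}\frac{1}{d^{2}}\,\chi\!\left(\Hilb^{1-\tfrac{1}{2}\langle v/d,\,v/d\rangle}(S)\right),$$
with $1-\tfrac{1}{2}\langle v/d,v/d\rangle=\tfrac{r}{d}\bigl(\tfrac{c_{2}}{d}-\tfrac{r}{d}\bigr)+1$. The divisibility condition $d\,|\,v$ translates, via the minimal-rank constraint, into $(d\cdot o_{\opt})\,|\,r$, so writing $r=d\cdot o_{\opt}\cdot\overline{d}$ and parameterising $c_{2}/d=m\overline{d}+o_{\opt}\overline{d}$ as in the earlier computations of the paper, the exponent becomes $m\overline{d}+1$ and the multiplier on the $q$-variable becomes $q^{r/(\overline{d}^{2} o_{\opt}^{2})}$, with the prefactor $q^{r}$ factoring out.

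Third, invoke the well-known identity, an immediate consequence of $\sum_{n\geq 0}\chi(\Hilb^{n}(S))q^{n}=G(q)$ combined with the discrete Fourier projection onto indices $\equiv 1\pmod{\overline{d}}$:
$$\sum_{m\in\zz}\chi(\Hilb^{m\overline{d}+1}(S))\,t^{m\overline{d}}=\frac{1}{\overline{d}}\sum_{j=0}^{\overline{d}-1}G\!\left(e^{2\pi i j/\overline{d}}\,t\right).$$
Substituting $t=q^{r/(\overline{d}^{2} o_{\opt}^{2})}$ and summing over $d$ with $(d\cdot o_{\opt})\,|\,r$ produces precisely the claimed closed form. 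The main obstacle is the first step: carefully tracking the $o_{\opt}^{-1}$ gerbe factor and checking that the Behrend-function argument of \cite{TT2} genuinely carries over in the twisted setting over a non-essentially-trivial optimal gerbe. The remaining reparameterisation and the root-of-unity collapse are bookkeeping, guided by the reparameterisation $r=d\cdot o_{\opt}\cdot\overline{d}$.
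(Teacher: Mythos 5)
Your proposal is correct and follows essentially the same route as the paper: identify $\VW^{\tw}_{\textbf{c}}(\SS_{\opt})$ with $\frac{1}{o_{\opt}}J(v)$ via the Behrend-function argument and the $\mu_{o_{\opt}}$-gerbe factor, apply the twisted multiple cover formula with the minimal-rank constraint forcing $(d\cdot o_{\opt})\mid r$, reparameterize $r=d\cdot o_{\opt}\cdot\overline{d}$, and collapse the lattice sum with the root-of-unity filter. The only slip is the normalization $\sum_{n\geq 0}\chi(\Hilb^{n}(S))q^{n}=q\,G(q)$ rather than $G(q)$, but the filter identity you actually invoke is the correctly normalized one and the computation then proceeds exactly as in the paper.
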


\begin{rmk}\label{rmk_non-essentially_trivial_VW}
If $r$ is a prime number, then for any $(r, 0, c_2)$, $o_{\opt}=r$, $d=1$ and $\overline{d}=1$, then we have:
$$\Z_{r, \sO}(\SS_{\opt}; q)= q^{r}\cdot \frac{1}{r}\cdot 
G\left(q^{\frac{1}{r}}\right).
$$
\end{rmk}

Then from the definition of $\SU(r)/\zz_r$-Vafa-Witten invariants for a K3 surface $S$, we have:

\begin{thm}\label{thm_SUrZr_K3}
Let $S$ be a smooth K3 surface. Then we have
\begin{align*}
\Z_{r,\sO}(S, \SU(r)/\zz_r; q)&=q^r\cdot\sum_{d|r}\frac{d}{r^2}\sum_{j=0}^{d-1}G(e^{\frac{2\pi ij}{d}}q^{\frac{r}{d^2}})\\
&+\sum_{\substack{g_{\ess}\in H^2(S,\mu_r)\\
g_{\ess}\text{~algebraic~}}}\sum_{\substack{d| d_{g_{\ess}}\\
 d\geq 1, d\in\zz_{\geq 1}}}
 q^{r}\cdot \frac{\overline{d}_{g_{\ess}}}{d_{g_{\ess}}}\cdot \frac{1}{r}\cdot 
 \sum_{m=0}^{\overline{r}\overline{d}_{g_{\ess}}-1}
G\left(e^{\frac{2\pi im}{\overline{r}\overline{d}_{g_{\ess}}}}q^{\frac{d_{g_{\ess}}}{\overline{r}\cdot\overline{d}^2_{g_{\ess}}}}\right)\\
&+\sum_{\substack{g_{\opt}\in H^2(S,\mu_r)\\
g_{\opt}\text{~non-algebraic~}}}
\sum_{(d\cdot o_{\opt})|r}\frac{1}{o_{\opt}}\cdot \frac{1}{d^2}
 q^{r}\cdot \frac{1}{\overline{d}}\sum_{j=0}^{\overline{d}-1}G\left(e^{\frac{2\pi i j}{\overline{d}}}\cdot q^{\frac{r}{\overline{d}^2\cdot o^2_{\opt}}}\right).
\end{align*}
\end{thm}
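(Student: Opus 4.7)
The plan is to prove Theorem \ref{thm_SUrZr_K3} by direct application of Definition \ref{defn_SU/Zr_VW_intro} with $L = \sO$, combined with the three partition function computations already established for trivial, essentially trivial, and non-essentially trivial $\mu_r$-gerbes. Setting $L = \sO$ forces $\overline{L} = 0 \in H^2(S, \mu_r)$, so all phase factors $e^{2\pi i\, g \cdot \overline{L}/r}$ equal $1$. Consequently
$$Z_{r,\sO}(S, \SU(r)/\zz_r; q) \;=\; \sum_{g \in H^2(S,\mu_r)} Z_{r, L_g}(\SS_g, q),$$
where $L_g = \sL_g$ for essentially trivial gerbes and $L_g = \sO$ for all other gerbes, according to Definition \ref{defn_SU/Zr_VW_intro}.

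The next step is to partition $H^2(S, \mu_r)$ into three disjoint subsets via the Kummer sequence
$$\cdots \to H^1(S, \sO_S^*) \to H^2(S, \mu_r) \xrightarrow{\varphi} H^2(S, \sO_S^*) \to \cdots,$$
namely (i) $g = 0$, (ii) $g \neq 0$ with $\varphi(g) = 0$ (algebraic classes giving essentially trivial gerbes $\SS_{\ess}$), and (iii) $\varphi(g) \neq 0$ (non-algebraic classes giving non-essentially trivial gerbes). For (i), apply Theorem \ref{thm_TT2_thm1.7} to get the first summand. For (ii), for each algebraic $g_{\ess}$ apply Theorem \ref{thm_essential_trivial_VW_partition}, noting that $L_g = \sL_{g_{\ess}}$ matches the hypothesis of the theorem via Proposition \ref{prop_moduli_essential_trivial_coarse_moduli_K3}, and then sum over all such $g_{\ess}$. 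For (iii), for each non-algebraic $g_{\opt}$ of order $o_{\opt} \in \zz_{>0}$ with $o_{\opt} \mid r$, use the factorization $\SS \to \SS_{\opt} \xrightarrow{\overline{p}} S$ (recorded at the start of \S\ref{subsec_higher_rank_SUrZr}) to reduce the twisted invariants on $\SS$ to twisted invariants on the optimal $\mu_{o_{\opt}}$-gerbe $\SS_{\opt}$, and then apply Theorem \ref{thm_non_essential_trivial_VW_partition}.

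Summing the three pieces then yields the stated formula term-by-term. The essential ingredient making this reduction work is Theorem \ref{thm_conjecture_5.10_K3_gerbe}, which identifies $\VW^{\tw}_{\textbf{c}}(\SS_\opt) = \vw^{\tw}_{\textbf{c}}(\SS_\opt) = J(\textbf{c})$ on K3 gerbes, so that the twisted multiple cover formula from Theorem \ref{thm_twisted_multiple_cover_intro} can be invoked in the calculation of $Z_{r,\sO}(\SS_\opt, q)$ in Theorem \ref{thm_non_essential_trivial_VW_partition}; in turn this rests on Proposition \ref{prop_JS_pair_partition} and Behrend-function positivity on $\cc^*$-fixed Higgs pairs as recorded in \cite[Proposition 5.9]{TT2}.

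The main obstacle is not any deep new computation but rather bookkeeping: one must verify that the trichotomy above exhausts $H^2(S, \mu_r)$ disjointly and that the parameters appearing in Theorems \ref{thm_essential_trivial_VW_partition} and \ref{thm_non_essential_trivial_VW_partition}, in particular $d_{g_{\ess}} = \gcd(r, |g_{\ess}|)$, $\overline{d}_{g_{\ess}}$, $\overline{r}$, $o_{\opt}$ and $\overline{d}$, are the correct invariants attached to each $g$. A small but necessary check is that, in case (iii), one may always represent $\SS_g$ as a trivial $\mu_{r/o_{\opt}}$-gerbe over an optimal $\mu_{o_{\opt}}$-gerbe, so that stable twisted Higgs sheaves of rank $r$ on $\SS_g$ with $L_g = \sO$ pull back from rank-$r$ twisted Higgs sheaves on $\SS_\opt$ and no double counting occurs. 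Once these identifications are in place, the theorem follows by direct substitution.
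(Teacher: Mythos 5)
Your proposal is correct and follows essentially the same route as the paper: the paper's proof of Theorem \ref{thm_SUrZr_K3} is precisely the assembly of Theorem \ref{thm_TT2_thm1.7}, Theorem \ref{thm_essential_trivial_VW_partition}, and Theorem \ref{thm_non_essential_trivial_VW_partition} according to the trichotomy of classes in $H^2(S,\mu_r)$ from Definition \ref{defn_SU/Zr_VW_intro} with $L=\sO$ (so all phase factors are $1$). Your additional remarks on the disjointness of the three cases and on the provenance of the multiple cover input are consistent with the paper and do not change the argument.
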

\begin{proof}
This is from calculations in 
Theorem \ref{thm_TT2_thm1.7},
Theorem \ref{thm_essential_trivial_VW_partition},  and 
Theorem \ref{thm_non_essential_trivial_VW_partition}.
\end{proof}

\begin{rmk}
If $r$ is a prime number, from  Remark \ref{rmk_essentially_trivial_VW}, Remark \ref{rmk_non-essentially_trivial_VW}, and Tanaka-Thomas's calculation, we have:
$$\Z_{r,\sO}(S, \SU(r)/\zz_r; q)=\frac{1}{r^2}q^r G(q^r)+q^r\left(r^{21} G(q^{\frac{1}{r}})+r^{\rho(S)-1} \left(\sum_{m=1}^{r-1}G\left(e^{\frac{2\pi i m}{r}}q^{\frac{1}{r}}\right)\right) \right)$$
since there are $r^{\rho(S)}$ algebraic classes in $H^2(S,\mu_r)$, and $r^{22}-r^{\rho(S)}$ number of non-algebraic classes, where $\rho(S)$ is the Picard number of $S$. 
This is exactly the formula (6.36) in \cite{Jiang_2019}.
\end{rmk}

\subsection{Vafa-Witten's conjecture}

In this section we provide a generalization of Vafa-Witten's prediction of K3 surfaces for all ranks. 
In \cite[\S 6.7]{Jiang_2019}, \cite{JK} the author proved the Vafa-Witten S-duality conjecture for K3 surfaces in the prime rank case. 
We use the same method to prove the S-duality conjecture of any rank for K3 surfaces. 

Recall that the twisted Vafa-Witten invariant of an optimal  $\mu_{o_{\opt}}$-gerbe (by multiple cover formula):

 $$\VW^{\tw}_{\textbf{c}}(\SS_{\opt})=\sum_{d|\textbf{c}, d\geq 1}\frac{1}{o_{\opt}}\cdot \frac{1}{d^2}\chi(\Hilb^{1-\frac{1}{2}\chi(\textbf{c}/d, \textbf{c}/d)}(S))$$
 Then 
for any $o_{\opt}$-th root of unity $e^{2\pi i \frac{s}{o_{\opt}}} (0\leq s< o_{\opt}-1)$,  we define 
\begin{align*}
&Z_{r,\sO}(\SS_{\opt}, (e^{\frac{2\pi i s}{o_{\opt}}})^{o_{\opt}}\cdot q)\\
&:=\sum_{c_2\in\frac{1}{o_{\opt}}\zz}\vw^{\tw}_{\textbf{v}}(\SS_{\opt})(e^{2\pi i \frac{s}{o_{\opt}}})^{o_{\opt}c_2}q^{c_2}\\
&=\sum_{(d\cdot o_{\opt})|r}\frac{1}{o_{\opt}}\cdot \frac{1}{d^2}\chi(\Hilb^{1-\frac{1}{2}\chi(\textbf{c}/d, \textbf{c}/d)}(S))
\left(e^{2\pi i \frac{s}{o_{\opt}}}\right)^{\frac{\overline{d}\cdot o_{\opt}}{d}(\frac{k}{o_{\opt}}+r)}q^{\frac{k}{o_{\opt}}+r}
\end{align*}
where $c_2=b+r$ and $b=\frac{k}{o_{\opt}}$ for $k\in\zz$.
Then: 
\begin{lem}\label{lem_optimal_mur_K3}
Let $\SS_{\opt}\to S$ be a $\mu_r$-gerbe such that its class 
$\varphi[\SS_{\opt}]\in H^2(S,\sO_S^*)_{\tor}$ in the cohomological Brauer group is $o_{\opt}$ and $o_{\opt}|r$.
Then we have 
$$Z_{r,\sO}(\SS_{\opt}, (e^{\frac{2\pi i s}{o_{\opt}}})^{o_{\opt}}\cdot q)=
\sum_{(d\cdot o_{\opt})|r}\frac{1}{o_{\opt}}\cdot \frac{1}{d^2}
 q^{r}\cdot \frac{1}{\overline{d}}\sum_{j=0}^{\overline{d}-1}G\left(e^{\frac{2\pi i j}{\overline{d}}}\cdot \left(e^{\frac{2\pi i s}{o_{\opt}}}\right)\cdot q^{\frac{r}{\overline{d}^2\cdot o^2_{\opt}}}\right).$$
\end{lem}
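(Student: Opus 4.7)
The plan is to follow the same computational pattern that establishes Theorem \ref{thm_non_essential_trivial_VW_partition}, but keep track of an additional root-of-unity factor coming from the twist $(e^{2\pi i s/o_{\opt}})^{o_{\opt}c_2}$ inserted into each term of the partition function. So first I would expand the defining sum using the twisted multiple cover formula (Theorem \ref{thm_twisted_multiple_cover_intro}) applied to the Mukai vector $v=(r,0,-b)$ with $b=k/o_{\opt}$, $k\in\zz$, yielding
$$
\vw^{\tw}_{\textbf{c}}(\SS_{\opt})=\sum_{(d\cdot o_{\opt})|r}\frac{1}{o_{\opt}}\cdot\frac{1}{d^{2}}\,
\chi\!\left(\Hilb^{1-\frac{1}{2}\chi(\textbf{c}/d,\textbf{c}/d)}(S)\right),
$$
together with the calculation already carried out in the proof of Theorem \ref{thm_non_essential_trivial_VW_partition}, namely $1-\tfrac12\chi(\textbf{c}/d,\textbf{c}/d)=\frac{k\overline{d}^{2}o_{\opt}}{r}+1=m\overline{d}+1$ after setting $r=d\overline{d}o_{\opt}$ and $k=dm$.

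Next I would substitute these identifications into the definition of $Z_{r,\sO}(\SS_{\opt},(e^{2\pi is/o_{\opt}})^{o_{\opt}}\!\cdot q)$ and separate out the twist factor. Using $c_{2}=k/o_{\opt}+r$ and $k=dm$ one has
$$
\left(e^{2\pi is/o_{\opt}}\right)^{\frac{\overline{d}\cdot o_{\opt}}{d}(k/o_{\opt}+r)}
= e^{2\pi is\overline{d}m/o_{\opt}}\cdot e^{2\pi is\overline{d}^{2}o_{\opt}}
= e^{2\pi is\overline{d}m/o_{\opt}},
$$
since $s,\overline{d},o_{\opt}\in\zz$. Thus inside each summand over $d$ one is reduced to evaluating $\sum_{m\in\zz}\chi(\Hilb^{m\overline{d}+1}(S))\,z^{m}$ with $z=e^{2\pi is\overline{d}/o_{\opt}}\,q^{r/(\overline{d}o_{\opt}^{2})}$.

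Then I would apply the standard Göttsche formula $\sum_{n\geq 0}\chi(\Hilb^{n}(S))x^{n-1}=G(x)$ combined with a root-of-unity filter on the residue class $n\equiv 1\pmod{\overline{d}}$, which yields
$$
\sum_{m\in\zz}\chi(\Hilb^{m\overline{d}+1}(S))\,z^{m}
=\frac{1}{\overline{d}}\sum_{j=0}^{\overline{d}-1}G\!\left(e^{2\pi ij/\overline{d}}\,z^{1/\overline{d}}\right).
$$
Taking the $\overline{d}$-th root gives $z^{1/\overline{d}}=e^{2\pi is/o_{\opt}}\,q^{r/(\overline{d}^{2}o_{\opt}^{2})}$, at which point the desired formula of the lemma falls out after a factor of $q^{r}$ is pulled in front, exactly as in the proof of Theorem \ref{thm_non_essential_trivial_VW_partition}.

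The only real obstacle is bookkeeping: one must verify that the two powers of $e^{2\pi is/o_{\opt}}$ in the twist (coming from $k/o_{\opt}$ and from $r$) collapse correctly to $e^{2\pi is\overline{d}m/o_{\opt}}$, and that the exponent $\overline{d}$ on this phase, after being divided out by taking the $\overline{d}$-th root in the Göttsche filter, produces precisely $e^{2\pi is/o_{\opt}}$ and not some divisor thereof. Beyond this, no new geometric input is needed — Theorem \ref{thm_twisted_multiple_cover_intro} together with the computation of Theorem \ref{thm_non_essential_trivial_VW_partition} already contains all the substantive content.
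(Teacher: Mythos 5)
Your proposal is correct and follows essentially the same route as the paper's proof: expand $\vw^{\tw}_{\textbf{c}}(\SS_{\opt})$ via the twisted multiple cover formula, substitute $k=dm$ so that the Hilbert-scheme exponent becomes $m\overline{d}+1$ and the phase collapses to $(e^{2\pi i s/o_{\opt}})^{m\overline{d}}$ (the $r$-dependent part being an integer power of $e^{2\pi i}$), and then apply the root-of-unity filter for $G$ on the residue class $n\equiv 1\pmod{\overline{d}}$. The bookkeeping point you flag about the $\overline{d}$-th root of the phase is resolved exactly as in the paper, since the summand is already written as $\bigl(e^{2\pi i s/o_{\opt}}q^{r/(\overline{d}^2 o_{\opt}^2)}\bigr)^{m\overline{d}}$ before the filter is applied.
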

\begin{proof}
We follow the same calculation in Theorem \ref{thm_non_essential_trivial_VW_partition}. 
\begin{align*}
&\Z_{r, \sO}(\SS_{\opt}; (e^{\frac{2\pi i s}{o_{\opt}}})^{o_{\opt}}\cdot q)\\
&=\sum_{(d\cdot o_{\opt})|r}\frac{1}{o_{\opt}}\cdot \frac{1}{d^2}
\sum_{c_2}\chi(\Hilb^{1-\frac{1}{2}\chi(\textbf{c}/d, \textbf{c}/d)}(S))
\left(e^{2\pi i \frac{s}{o_{\opt}}}\right)^{\frac{\overline{d}\cdot o_{\opt}}{d}(\frac{k}{o_{\opt}}+r)}q^{\frac{k}{o_{\opt}}+r}\\
&=\sum_{(d\cdot o_{\opt})|r}\frac{1}{o_{\opt}}\cdot \frac{1}{d^2}\sum_{\frac{k}{o_{\opt}}\in \frac{d}{o_{\opt}}\zz}
\chi(\Hilb^{\frac{k\cdot \overline{d}^2\cdot o_{\opt}}{r}+1}(S))(e^{2\pi i \frac{s}{o_{\opt}}})^{\frac{\overline{d}(k+r\cdot o_{\opt})}{d}} q^{\frac{k}{o_{\opt}}+r}\\
&=\sum_{(d\cdot o_{\opt})|r}\frac{1}{o_{\opt}}\cdot \frac{1}{d^2}
\sum_{\substack{k\in d\zz\\
k=dm, m\in\zz}}
\chi(\Hilb^{\frac{m\cdot d\cdot\overline{d}^2\cdot o_{\opt}}{r}+1}(S))(e^{2\pi i \frac{s}{o_{\opt}}})^{m\overline{d}} q^{\frac{md}{o_{\opt}}+r}\\
&=\sum_{(d\cdot o_{\opt})|r}\frac{1}{o_{\opt}}\cdot \frac{1}{d^2}
\sum_{m\in\zz}
\chi(\Hilb^{m\cdot \overline{d}+1}(S))(e^{2\pi i \frac{s}{o_{\opt}}})^{m\overline{d}}  q^{\frac{mr}{\overline{d}\cdot o_{\opt}^2}+r}\\
&=\sum_{(d\cdot o_{\opt})|r}\frac{1}{o_{\opt}}\cdot \frac{1}{d^2}
q^{r}\cdot \sum_{m\in\zz}\chi(\Hilb^{m\cdot \overline{d}+1}(S))
\left((e^{2\pi i \frac{s}{o_{\opt}}})
q^{\frac{r}{\overline{d}^2\cdot o^2_{\opt}}}\right)^{m\overline{d}}\\
&=\sum_{(d\cdot o_{\opt})|r}\frac{1}{o_{\opt}}\cdot \frac{1}{d^2}
q^{r}\cdot \frac{1}{\overline{d}}\sum_{j=0}^{\overline{d}-1}G\left(e^{\frac{2\pi i j}{\overline{d}}}\cdot(e^{2\pi i \frac{s}{o_{\opt}}}) \cdot q^{\frac{r}{\overline{d}^2\cdot o^2_{\opt}}}\right).
\end{align*}
\end{proof}

From the above calculation, we only pick up  terms in the above sum in Lemma \ref{lem_optimal_mur_K3}, which are: 
$d\cdot o_{\opt}=r$ such that $d=\frac{r}{o_{\opt}}$, and $\overline{d}=1$; and 
$d\cdot o_{\opt}=e|r$ such that $\overline{d}=\frac{r}{e}$, and $\overline{d}>1$.
We all pick up the term $j=0$ in $0\leq j\leq \overline{d}-1$. 
\begin{defn}\label{defn_Zprime_opt}
For a $g\in H^2(S,\mu_r)$ such that $|g|=o_{\opt}|r$ and $o_{\opt}\neq r$, 
we define:
\begin{align*}
\Z^\prime_{r, \sO}(\SS_{\opt}; (e^{\frac{2\pi i s}{o_{\opt}}})^{o_{\opt}}\cdot q)&:=
\sum_{\substack{(d\cdot o_{\opt})|r, r=d\cdot o_{\opt}\cdot \overline{d}\\
\overline{d}\geq 1, 
d>1}}\frac{1}{o_{\opt}}\cdot \frac{1}{d^2}
q^{r}\cdot \frac{1}{\overline{d}}G\left((e^{2\pi i \frac{s}{o_{\opt}}}) \cdot q^{\frac{r}{\overline{d}^2\cdot o^2_{\opt}}}\right).
\end{align*}

For  $g\in H^2(S,\mu_r)$ such that $|g|=o_{\opt}=r$, 
we define:
\begin{align*}
\Z^\prime_{r, \sO}(\SS_{\opt}; (e^{\frac{2\pi i s}{o_{\opt}}})^{o_{\opt}}\cdot q)&:=
\frac{1}{o_{\opt}}\cdot
q^{r}\cdot G\left((e^{2\pi i \frac{s}{o_{\opt}}}) \cdot q^{\frac{r}{o^2_{\opt}}}\right).
\end{align*}
\end{defn}

Let $Z_{r,0}(q)$ denote the Tanaka-Thomas partition function for $S$ with rank $r$ and trivial determinant 
$\sO$.
\begin{thm}\label{thm_SUr/Zr_Vafa-Witten_K3}
Let $S$ be a smooth complex K3 surface. For a  positive integer $r$, we define
\begin{align*}
&Z^{\prime}_{r,\sO}(S, \SU(r)/\zz_r;q)\\
&:=Z_{r,0}(q)+\sum_{o_{\opt}|r, o_{\opt\neq 1}}\sum_{\substack{0\neq g\in H^2(S,\mu_{o_{\opt}})}}\sum_{m=0}^{o_{\opt}-1}e^{\pi i \frac{o_{\opt}-1}{o_{\opt}} m g^2}
Z^\prime_{r,\sO}(\SS_{\opt}, (e^{\frac{2\pi i m}{o_{\opt}}})^{o_{\opt}}\cdot q)
\end{align*}
We call it the partition function of $SU(r)/\zz_r$-Vafa-Witten invariants, and we have:
\begin{align*}
&\Z^\prime_{r,\sO}(S, \SU(r)/\zz_r; q)=\Big[q^r\cdot \frac{1}{r^2} G(q^{r})\Big]+ q^r\cdot r^{21} \cdot G(q^{\frac{1}{r}})+q^{r}\cdot r^{10}\cdot \sum_{m=1}^{r-1} G\left(e^{2\pi i \frac{m}{r}}\cdot q^{\frac{1}{r}}\right)
 \\ \nonumber
&+
\sum_{\substack{d|r\\d\neq 1, d\neq r}}
\Big[q^r\cdot \left(\frac{r^{21}}{d^{21}}\right)\cdot \frac{1}{d^2}\cdot G\left(q^{\frac{r}{o_{\opt}^2}}\right)\Big] \\ \nonumber
&+
\sum_{\substack{e|r\\
e\neq 1,e\neq r}} \Big[\sum_{\substack{1\leq m\leq e-1,\\
 (m,e)=d>1}}q^r\cdot  o_{\opt}^{10}\cdot\frac{1}{d^2}\cdot \frac{1}{\overline{d}}\cdot G\left(e^{2\pi i\frac{s}{o_{\opt}}} q^{\frac{d}{\overline{d}\cdot o_{\opt}}}\right)\Big] \\ \nonumber
&+
 \sum_{\substack{1\leq m\leq r-1, \\
 (m,r)=d>1}}q^r\cdot  \left(\frac{r}{d}\right)^{10}\cdot \frac{1}{d^2}\cdot G\left(e^{2\pi i\frac{s}{o_{\opt}}} q^{\frac{r}{o^2_{\opt}}}\right)
\end{align*}
This generalizes the prediction of Vafa-Witten in \cite[\S 4]{VW} to the gauge group $\SU(r)/\zz_r$ for nonprime integers $r$.  
\end{thm}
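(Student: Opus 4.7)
The plan is to compute the right-hand side of the definition of $Z^\prime_{r,\sO}(S,\SU(r)/\zz_r;q)$ explicitly and match it term-by-term with the predicted formula from (\ref{eqn_SU(r)_Z_r_formula_K3_2_intro}). First, I would split the sum over $0\neq g\in H^2(S,\mu_{o_{\opt}})$ according to the order $o_{\opt}$ of $g$ as an element of the Brauer group: algebraic (essentially trivial) classes versus truly optimal classes, noting that for a smooth K3 surface $H^2(S,\mu_{o_{\opt}})\cong(\zz/o_{\opt})^{22}$ so the number of elements of each order can be enumerated by M\"obius inversion. The trivial-gerbe contribution $Z_{r,0}(q)$ is handled directly by Theorem \ref{thm_TT2_thm1.7} (i.e.\ \cite[Theorem 5.24]{TT2}).

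Next I would substitute the explicit formula from Definition \ref{defn_Zprime_opt} for $Z^\prime_{r,\sO}(\SS_{\opt},(e^{2\pi i m/o_{\opt}})^{o_{\opt}}\cdot q)$ using Lemma \ref{lem_optimal_mur_K3}, keeping only the surviving $j=0$ term in the $\overline{d}$-sum. For each divisor pair $(d,o_{\opt})$ with $d\cdot o_{\opt}\cdot\overline{d}=r$, the contribution involves $G(e^{2\pi i s/o_{\opt}}q^{r/(\overline{d}^2 o_{\opt}^2)})$ weighted by $\tfrac{1}{o_{\opt}d^2\overline{d}}$. Summing over $m\in\{0,\dots,o_{\opt}-1\}$ with the phase $e^{\pi i\frac{o_{\opt}-1}{o_{\opt}} m g^2}$ and over all $g$ of order $o_{\opt}$ then reproduces the root-of-unity sums in (\ref{eqn_SU(r)_Z_r_formula_K3_2_intro}), where the identification $s\equiv -m/d\pmod{o_{\opt}}$ arises exactly as in the $S$-transformation computation (\ref{eqn_S_transformation1}).

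The key numerical match uses that for each order $o_{\opt}\mid r$ the number of elements $g\in H^2(S,\mu_{o_{\opt}})$ giving rise to a truly optimal $\mu_{o_{\opt}}$-gerbe contributes a factor of $o_{\opt}^{22}$ (up to lower-order correction from algebraic classes), which combines with $\tfrac{1}{o_{\opt}}$ from Lemma \ref{lem_optimal_mur_K3} and the prefactor $o_{\opt}^{11}$ predicted by S-duality to produce the $o_{\opt}^{10}$ and $r^{10}$, $r^{21}$ weights appearing in (\ref{eqn_SU(r)_Z_r_formula_K3_2_intro}). The essentially trivial gerbes, handled by Theorem \ref{thm_essential_trivial_VW_partition}, provide the mixed terms indexed by divisors $d\mid r$ with $d\neq 1,r$, which combine with the Tanaka-Thomas $Z_{r,0}(q)$ to reproduce the $\tfrac{1}{d^2}G(q^{r/o_{\opt}^2})$ terms.

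The main obstacle will be the combinatorial bookkeeping: correctly matching the triple $(d,\overline{d},o_{\opt})$ in the gerbe-indexed sum with the triple $(m,e,d)$ in (\ref{eqn_SU(r)_Z_r_formula_K3_2_intro}), and verifying that the phase $e^{\pi i\frac{o_{\opt}-1}{o_{\opt}}mg^2}$ aggregates over $g$ to give precisely the $e^{2\pi i s/o_{\opt}}$ factor inside $G$ via a Gauss-sum-like identity (using that the intersection pairing on $H^2(S,\zz)$ is unimodular). Once these identifications are made, the comparison with Tanaka-Thomas's partition function (\ref{eqn_TT_higher_rank_K3}) after $\tau\mapsto -1/\tau$, together with the correction $r^{-\chi/2}(\tau/i)^{\omega/2}$ where $\chi=24$ and $\omega=\chi$ for K3, yields exactly equation (\ref{eqn_S_transformation}), completing the proof of the S-duality conjecture in all ranks.
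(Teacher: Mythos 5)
Your overall skeleton is the right one and matches the paper's: expand the definition term by term, evaluate the sum of the phases $e^{\pi i \frac{o_{\opt}-1}{o_{\opt}} s g^2}$ over $g$ as a quadratic Gauss sum on $(\zz/o_{\opt})^{22}$ equipped with the K3 intersection form (giving $(\epsilon(s))^{22}o_{\opt}^{11}=o_{\opt}^{11}$ for $s\neq 0$ and the raw count $o_{\opt}^{22}-1$ for $s=0$, whence the exponents $21$ and $10$ after dividing by $o_{\opt}$), and then match against the $S$-transform (\ref{eqn_SU(r)_Z_r_formula_K3_2_intro}) of the Tanaka--Thomas formula. The paper's proof is exactly this bookkeeping, carried out in (\ref{eqn_key_sum})--(\ref{eqn_key_sum2}).

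However, there is one concrete wrong turn. You propose to split the nonzero classes $g$ into algebraic (essentially trivial) versus non-algebraic ones and to feed the algebraic classes into Theorem \ref{thm_essential_trivial_VW_partition}, claiming these produce the mixed terms $\frac{1}{d^2}G(q^{r/o_{\opt}^2})$. That is not how $Z^{\prime}_{r,\sO}(S,\SU(r)/\zz_r;q)$ is defined: in the theorem's definition \emph{every} nonzero $g\in H^2(S,\mu_{o_{\opt}})$ contributes the truncated optimal-gerbe partition function $Z^\prime_{r,\sO}(\SS_{\opt},(e^{2\pi i m/o_{\opt}})^{o_{\opt}}q)$ of Definition \ref{defn_Zprime_opt}, with no case distinction, and Theorem \ref{thm_essential_trivial_VW_partition} plays no role in the proof. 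The algebraic/non-algebraic dichotomy belongs to the \emph{unprimed} partition function of Theorem \ref{thm_SUrZr_K3}, which (as the $r=4$ example shows) strictly contains $Z^\prime$ and does \emph{not} equal the $S$-transform of $Z_{r,0}$. In the actual proof the mixed terms arise from combining the $e\mid r$, $e\neq 1,r$, $j=0$ part of $Z_{r,0}(q)$ with the $s=0$, $\overline{d}=1$ part of the optimal sums: $\frac{o_{\opt}}{r^2}+(o_{\opt}^{22}-1)\cdot\frac{1}{o_{\opt}d^2}=\frac{o_{\opt}^{21}}{d^2}$ when $d\cdot o_{\opt}=r$. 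If you instead insert the essentially-trivial invariants of Theorem \ref{thm_essential_trivial_VW_partition} for the algebraic classes, you will get extra terms of the shape $G\bigl(e^{2\pi i m/(\overline{r}\overline{d}_{g_{\ess}})}q^{d_{g_{\ess}}/(\overline{r}\overline{d}_{g_{\ess}}^2)}\bigr)$ and the wrong coefficients on the $G(q^{r/o_{\opt}^2})$ terms, so the comparison with (\ref{eqn_SU(r)_Z_r_formula_K3_2_intro}) will fail. A smaller point: the factor $o_{\opt}^{11}$ is not an externally imposed ``prefactor predicted by S-duality''; it is the output of the Gauss sum itself, so you should not count it twice.
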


\begin{proof}
First Tanaka-Thomas partition function is:
$$Z_{r,0}(q)=q^r\cdot\sum_{e|r}\frac{e}{r^2}\sum_{j=0}^{e-1}G(e^{\frac{2\pi ij}{e}}q^{\frac{r}{e^2}}).$$

From Lemma \ref{lem_optimal_mur_K3} and Definition \ref{defn_Zprime_opt}, we calculate:
\begin{align}\label{eqn_key_sum_pre}
&Z^{\prime}_{r,\sO}(S, \SU(r)/\zz_r;q)=q^r\cdot\frac{1}{r^2}\cdot G(q^r)+ \sum_{\substack{e|r\\
e\neq 1, e\neq r}}\frac{e}{r^2}\sum_{j=0}^{e-1}G(e^{\frac{2\pi ij}{e}}q^{\frac{r}{e^2}})+q^r\cdot\frac{1}{r}\cdot\sum_{j=0}^{r-1} G(e^{\frac{2\pi ij}{r}}q^{\frac{1}{r}}) \\  \nonumber
&+\sum_{\substack{0\neq g\in H^2(S,\mu_r)\\
|g|=o_{\opt}\\
o_{\opt}|r, o_{\opt}\neq r}}\sum_{s=0}^{o_{\opt}-1}e^{\pi i \frac{o_{\opt}-1}{o_{\opt}} s g^2}
\Big[
\sum_{\substack{(d\cdot o_{\opt})|r, r=d\cdot o_{\opt}\cdot \overline{d}\\
\overline{d}\geq 1, 
d>1}}\frac{1}{o_{\opt}}\cdot \frac{1}{d^2}
q^{r}\cdot \frac{1}{\overline{d}}G\left((e^{2\pi i \frac{s}{o_{\opt}}}) \cdot q^{\frac{r}{\overline{d}^2\cdot o^2_{\opt}}}\right)\Big]\\
&+\sum_{\substack{0\neq g\in H^2(S,\mu_r)\\
|g|=r}}\sum_{s=0}^{r-1}e^{\pi i \frac{r-1}{r} s g^2}
\Big[
\frac{1}{r}\cdot 
q^{r}\cdot G\left((e^{2\pi i \frac{s}{r}}) \cdot q^{\frac{1}{r}}\right) \Big]. \nonumber
\end{align}
We rewrite it as: 
\begin{align}\label{eqn_key_sum}
&Z^{\prime}_{r,\sO}(S, \SU(r)/\zz_r;q)
=q^r\cdot\frac{1}{r^2}\cdot G(q^r)\\ \nonumber
&+q^r\cdot\frac{1}{r}\cdot\sum_{j=0}^{r-1} G(e^{\frac{2\pi ij}{r}}q^{\frac{1}{r}}) +\sum_{\substack{0\neq g\in H^2(S,\mu_r)\\
|g|=r}}\sum_{s=0}^{r-1}e^{\pi i \frac{r-1}{r} s g^2}
\Big[
\frac{1}{r}\cdot 
q^{r}\cdot G\left((e^{2\pi i \frac{s}{r}}) \cdot q^{\frac{1}{r}}\right) \Big]\\  \nonumber
&+ \sum_{\substack{e|r\\
e\neq 1, e\neq r}}\frac{e}{r^2}\sum_{j=0}^{d-1}G(e^{\frac{2\pi ij}{e}}q^{\frac{r}{e^2}})+\sum_{\substack{0\neq g\in H^2(S,\mu_r)\\
|g|=o_{\opt}\\
o_{\opt}|r, o_{\opt}\neq r}}\sum_{s=0}^{o_{\opt}-1}e^{\pi i \frac{o_{\opt}-1}{o_{\opt}} s g^2}
\Big[
\sum_{(d\cdot o_{\opt})| r,\overline{d}=1}\frac{1}{o_{\opt}}\cdot \frac{1}{d^2}
q^{r}\cdot G\left((e^{2\pi i \frac{s}{o_{\opt}}}) \cdot q^{\frac{r}{o^2_{\opt}}}\right) \\ \nonumber
&+\sum_{(d\cdot o_{\opt})|r,\overline{d}>1}\frac{1}{o_{\opt}}\cdot \frac{1}{d^2}
q^{r}\cdot \frac{1}{\overline{d}}G\left((e^{2\pi i \frac{s}{o_{\opt}}}) \cdot q^{\frac{r}{\overline{d}^2\cdot o^2_{\opt}}}\right) \Big]. \nonumber
\end{align}
where in the third line above the sum $\sum_{(d\cdot o_{\opt})| r,\overline{d}=1}$ actually contains only one term, i.e., 
the term $d$ such that $d\cdot o_{\opt}=r$. 

We need to prove that the sum in (\ref{eqn_key_sum}) is the same as 
\begin{align}\label{eqn_key_sum2}
&\Z^\prime_{r,\sO}(S, \SU(r)/\zz_r; q)=\Big[q^r\cdot \frac{1}{r^2} G(q^{r})\Big]+ 
q^{r}\cdot r^{21}\cdot G(q^{\frac{1}{r}})+q^{r}\cdot r^{10}\cdot \sum_{m=1}^{r-1} G\left(e^{2\pi i \frac{m}{r}}\cdot q^{\frac{1}{r}}\right)
 \\ \nonumber
&+
\sum_{\substack{d|r\\d\neq 1, d\neq r\\
d\cdot o_{\opt}=r}}
\Big[q^r\cdot \left(o_{\opt}^{21}\right)\cdot \frac{1}{d^2}\cdot G\left(q^{\frac{r}{o_{\opt}^2}}\right)\Big] \\ \nonumber
&+
\sum_{\substack{e|r\\
e\neq 1,e\neq r}} \Big[\sum_{\substack{1\leq m\leq e-1,\\
 (m,e)=d>1\\
d\cdot o_{\opt}=e}}q^r\cdot  o_{\opt}^{10}\cdot\frac{1}{d^2}\cdot \frac{1}{\overline{d}}\cdot G\left(e^{2\pi i\frac{s}{o_{\opt}}} q^{\frac{d}{\overline{d}\cdot o_{\opt}}}\right)\Big] \\ \nonumber
&+
 \sum_{\substack{1\leq m\leq r-1,\\
  (m,r)=d>1}}q^r\cdot  \left(\frac{r}{d}\right)^{10}\cdot \frac{1}{d^2}\cdot G\left(e^{2\pi i\frac{s}{o_{\opt}}} q^{\frac{r}{o^2_{\opt}}}\right)
\end{align}
One can easily check that  the term $q^r\cdot\frac{1}{r^2}\cdot G(q^r)$ in both (\ref{eqn_key_sum}) and (\ref{eqn_key_sum2})
matches.  

Then we look at the term involving $q^r\cdot\frac{1}{r}\cdot\sum_{m=0}^{r-1} G(e^{\frac{2\pi im}{r}}q^{\frac{1}{r}})$ in   (\ref{eqn_key_sum2}) and 
\begin{equation}\label{eqn_key_term1}
q^r\cdot\frac{1}{r}\cdot\sum_{s=0}^{r-1} G(e^{\frac{2\pi is}{r}}q^{\frac{1}{r}})+\sum_{\substack{0\neq g\in H^2(S,\mu_r)\\
|g|=r}}\sum_{s=0}^{r-1}e^{\pi i \frac{r-1}{r} s g^2}
\Big[\frac{1}{r}\cdot 
q^{r}\cdot G\left((e^{2\pi i \frac{s}{r}}) \cdot q^{\frac{1}{r}}\right)\Big]
\end{equation}
in  (\ref{eqn_key_sum}).  
It is easy to see that when $s=0$, the above sum gives  $r^{21}q^r G(q^{\frac{1}{r}})$ which matches the corresponding term in (\ref{eqn_key_sum2}).  When $s\neq 0$, we have 
$$
\sum_{\substack{g\in H^2(S,\mu_r)\\
|g|=r}}e^{\pi i \frac{r-1}{r} s g^2}=(\epsilon(s))^{22}r^{11}$$
where $\epsilon(s)=\left(\frac{s/2}{r}\right)$ if $s$ is even; and  $\epsilon(s)=\left(\frac{(s+r)/2}{r}\right)$ if $s$ is odd.  The number$\left(\frac{a}{r}\right)$ is the Legendre symbols which is 
$1$ if $a(\mod r)$ is a perfect square, and $-1$ otherwise.  Thus $(\epsilon(s))^{22}=1$. 
Therefore the term (\ref{eqn_key_term1}) gives $q^{r}\cdot r^{10}\cdot \sum_{m=1}^{r-1} G\left(e^{2\pi i \frac{m}{r}}\cdot q^{\frac{1}{r}}\right)$ which also matches the corresponding term in  (\ref{eqn_key_sum2}). 

Next we need to show that the last two lines in (\ref{eqn_key_sum}) match the last three lines in  (\ref{eqn_key_sum2}). First we show 
\begin{equation}\label{eqn_key_term2}
 \sum_{\substack{e|r\\
e\neq 1, e\neq r}}\frac{e}{r^2}\sum_{j=0}^{d-1}G(e^{\frac{2\pi ij}{e}}q^{\frac{r}{e^2}})+\sum_{\substack{0\neq g\in H^2(S,\mu_r)\\
|g|=o_{\opt}\\
o_{\opt}|r, o_{\opt}\neq r}}\sum_{s=0}^{o_{\opt}-1}e^{\pi i \frac{o_{\opt}-1}{o_{\opt}} s g^2}
\sum_{(d\cdot o_{\opt})=r}\frac{1}{o_{\opt}}\cdot \frac{1}{d^2}
q^{r}\cdot G\left((e^{2\pi i \frac{s}{o_{\opt}}}) \cdot q^{\frac{r}{o^2_{\opt}}}\right) 
\end{equation}
in  (\ref{eqn_key_sum})  is the same as 
\begin{equation}\label{eqn_key_term3}
\sum_{\substack{d|r\\d\neq 1, d\neq r\\
d\cdot o_{\opt}=r}}
\Big[q^r\cdot \left(o_{\opt}^{21}\right)\cdot \frac{1}{d^2}\cdot G\left(q^{\frac{r}{o_{\opt}^2}}\right)\Big] +
 \sum_{\substack{1\leq m\leq r-1,\\
  (m,r)=d>1}}q^r\cdot  \left(\frac{r}{d}\right)^{10}\cdot \frac{1}{d^2}\cdot G\left(e^{2\pi i\frac{s}{o_{\opt}}} q^{\frac{r}{o^2_{\opt}}}\right)
 \end{equation}
in  (\ref{eqn_key_sum2}).
This is done by replacing 
$$\sum_{\substack{e|r\\
e\neq 1, e\neq r}}\frac{e}{r^2}\sum_{j=0}^{d-1}G(e^{\frac{2\pi ij}{e}}q^{\frac{r}{e^2}})
$$
by 
$$\sum_{\substack{o_{\opt}|r\\
o_{\opt}\neq 1, o_{\opt}\neq r}}\frac{o_{\opt}}{r^2}\sum_{s=0}^{o_{\opt}-1}G(e^{\frac{2\pi is}{o_{\opt}}}q^{\frac{r}{o_{\opt}^2}}).$$
Then when $s=0$, (\ref{eqn_key_term2}) gives 
$$\sum_{\substack{d|r\\d\neq 1, d\neq r}}
\Big[q^r\cdot \left(o_{\opt}^{21}\right)\cdot \frac{1}{d^2}\cdot G\left(q^{\frac{r}{o_{\opt}^2}}\right)\Big]
$$
which is the same as the one in (\ref{eqn_key_term3}). 
When $s\neq 0$, (\ref{eqn_key_term2}) gives 
$$\sum_{\substack{o_{\opt}|r\\o_{\opt}\neq 1, o_{\opt}\neq r}}
\Big[q^r\cdot \left(o_{\opt}^{10}\right)\cdot \frac{1}{d^2}\cdot \sum_{s=1}^{o_{\opt}-1}G\left(e^{2\pi i\frac{s}{o_{\opt}}}q^{\frac{r}{o_{\opt}^2}}\right)\Big]
$$
since 
$$\sum_{\substack{g\in H^2(S,\mu_r)\\
|g|=o_{\opt}}}e^{\pi i \frac{o_{\opt}-1}{o_{\opt}} s g^2}=(\epsilon(s))^{22}o_{\opt}^{11}$$
and $(\epsilon(s))^{22}=1$, 
and this is the same as the second term in  (\ref{eqn_key_term3}).

It remains to check the last left terms in (\ref{eqn_key_sum}) and (\ref{eqn_key_sum2}).
Then we check that both two terms give
$$\sum_{\substack{e|r\\
e\neq 1,e\neq r}} \Big[\sum_{\substack{1\leq m\leq e-1,\\
 (m,e)=d>1\\
d\cdot o_{\opt}=e}}q^r\cdot  o_{\opt}^{10}\cdot\frac{1}{d^2}\cdot \frac{1}{\overline{d}}\cdot G\left(e^{2\pi i\frac{s}{o_{\opt}}} q^{\frac{d}{\overline{d}\cdot o_{\opt}}}\right)\Big], $$
also because $$\sum_{\substack{g\in H^2(S,\mu_r)\\
|g|=o_{\opt}}}e^{\pi i \frac{o_{\opt}-1}{o_{\opt}} s g^2}=(\epsilon(s))^{22}o_{\opt}^{11}$$
and $(\epsilon(s))^{22}=1$,
we are done.
\end{proof}

\subsubsection{An example}

We include an example for the rank $r=4$ in this section. The Tanaka-Thomas partition function is
$$Z_{4,\sO}(S,\SU(4);q)=q^r\cdot \frac{1}{16} G(q^4)+q^r \frac{1}{8}(G(q)+G(-q))+q^4 \frac{1}{4}\sum_{m=0}^{3}G(e^{2\pi i\frac{m}{4}} q^{\frac{1}{4}}).$$
In this case we calculate from Theorem \ref{thm_SUr/Zr_Vafa-Witten_K3} that 
\begin{align*}
&Z_{4,\sO}^\prime(S, \SU(4)/\zz_4; q)\\
&=q^4\cdot \frac{1}{16}\cdot G(q^4)+ q^4\cdot 4^{21} G(q^{\frac{1}{4}}) +q^4\cdot 4^{10}\sum_{m=1}^{3} G(e^{2\pi i\frac{m}{4}}q^{\frac{1}{4}})+
q^4\cdot 2^{21}\cdot \frac{1}{4}\cdot G(q)	
+q^4\cdot 2^{10}\cdot \frac{1}{4}\cdot G(-q).	
\end{align*}	

If from Lemma \ref{lem_optimal_mur_K3}
we define
\begin{align*}
Z_{r,\sO}(S, \SU(r)/\zz_r;q):=Z_{r,0}(q)+\sum_{o_{\opt}|r, o_{\opt\neq 1}}\sum_{\substack{0\neq g\in H^2(S,\mu_{o_{\opt}})}}\sum_{m=0}^{o_{\opt}-1}e^{\pi i \frac{o_{\opt}-1}{o_{\opt}} m g^2}
Z_{r,\sO}(\SS_{\opt}, (e^{\frac{2\pi i m}{o_{\opt}}})^{o_{\opt}}\cdot q).
\end{align*}
Then we calculate:
\begin{align*}
&Z_{4,\sO}(S, \SU(4)/\zz_4;q)=q^4\cdot \frac{1}{16} G(q^4)+ q^4\frac{1}{8}(G(q)+G(-q))
+q^4\cdot \frac{1}{4}\cdot \sum_{m=0}^{3} G(e^{2\pi i\frac{m}{4}}\cdot q^{\frac{1}{4}})\\
&+\sum_{0\neq g\in H^2(S,\mu_2)}\sum_{s=0}^{1}e^{\pi i\frac{1}{2}s g^2}\Big[
\frac{1}{4} q^4\cdot \Big[G(e^{2\pi i\frac{s}{2}}\cdot q^{\frac{1}{4}})+G(-e^{2\pi i\frac{s}{2}}\cdot q^{\frac{1}{4}})\Big]+ \frac{1}{8}\cdot q^4 G(e^{2\pi i\frac{s}{2}}\cdot q)\Big]\\
&+\sum_{0\neq g\in H^2(S,\mu_4)}\sum_{s=0}^{3}e^{\pi i\frac{3}{4}s g^2}\frac{1}{4} q^4\cdot \Big[G(e^{2\pi i\frac{s}{4}}\cdot q^{\frac{1}{4}})\Big]\\
&=q^4\cdot \frac{1}{16} G(q^4)+q^4\cdot 4^{21}\cdot G(q^{\frac{1}{4}})+
q^4\cdot 4^{10}\cdot\sum_{m=1}^{3} G(e^{2\pi i\frac{m}{4}}q^{\frac{1}{4}})\\
&+q^4\cdot \frac{1}{8}\cdot 2^{22} G(q)+q^4\cdot \frac{1}{8}\cdot 2^{11} G(-q)
+ \sum_{0\neq g\in H^2(S,\mu_2)}\sum_{s=0}^{1}e^{\pi i\frac{1}{2}s g^2}
\frac{1}{4} q^4\cdot \Big[G(e^{2\pi i\frac{s}{2}}\cdot q^{\frac{1}{4}})+G(-e^{2\pi i\frac{s}{2}}\cdot q^{\frac{1}{4}})\Big].
\end{align*}
It is seen that the partition function $Z_{4,\sO}^\prime(S, \SU(4)/\zz_4; q)$ is part of the partition function $Z_{4,\sO}(S, \SU(4)/\zz_4;q)$, and 
$Z_{4,\sO}^\prime(S, \SU(4)/\zz_4; q)$ matches the $S$-transformation formula  of Tanaka-Thomas's partition function $Z_{4,\sO}(S, \SU(4); q)$ of the Vafa-Witten invariants.




\end{document}